\newenvironment{poc}{\begin{proof}[Proof of claim]}{\end{proof}}
\definecolor{darkblue}{rgb}{0,0,0.5}
\newtheorem{theorem}{Theorem}[section]
\newtheorem{lemma}[theorem]{Lemma}
\newtheorem{conj}[theorem]{Conjecture}
\theoremstyle{definition}
\theoremstyle{plain}
\newtheorem{claim}[theorem]{Claim}
\theoremstyle{definition}
\theoremstyle{definition}
\theoremstyle{definition}
\theoremstyle{definition}
\newtheorem{defn}[theorem]{Definition}
\theoremstyle{definition}
\theoremstyle{definition}
\theoremstyle{definition}
\newcommand{\ep}{\varepsilon}
\newcommand{\bN}{\ensuremath{\mathbb{N}}}
\title{Shape of the asymptotic maximum sum-free sets in integer lattice grids}
\author{
	Hong Liu\thanks{Extremal Combinatorics and Probability Group (ECOPRO), Institute for Basic Science (IBS), Daejeon, South
Korea, Email: {\tt hongliu@ibs.re.kr}. H.L. was supported by the Institute for Basic Science (IBS-R029-C4).}
	\and
	Guanghui Wang\thanks{School of Mathematics, Shandong University, China. Email: {\tt ghwang@sdu.edu.cn}. G.W. was supported by National Key R\&D Program of China (2020YFA0712400), Natural Science Foundation of China (11871311, 11631014) and seed fund program for international research cooperation of Shandong University.}
	\and
	Laurence Wilkes\thanks{Department of Computer Science, KU Leuven, Belgium. Email: {\tt laurence.wilkes@kuleuven.be}.
	}
	\and
	Donglei Yang\thanks{Data Science Institute, Shandong University, China, Email: {\tt dlyang@sdu.edu.cn}. D.Y. was supported by the China Postdoctoral Science Foundation (2021T140413), Natural Science Foundation of China (12101365) and Natural Science Foundation of Shandong Province (ZR2021QA029).}
}
\begin{document}

\maketitle

\begin{abstract}
    We determine the shape of all sum-free sets in $\{1,2,\ldots,n\}^2$ of size close to the maximum $\frac{3}{5}n^2$, solving a problem of Elsholtz and Rackham. We show that all such asymptotic maximum sum-free sets lie completely in the stripe $\frac{4}{5}n-o(n)\le x+y\le\frac{8}{5}n+ o(n)$.  We also determine for any positive integer $p$ the maximum size of a subset $A\subseteq \{1,2,\ldots,n\}^2$ which forbids the triple $(x,y,z)$ satisfying $px+py=z$.
\end{abstract}

\section{Introduction}
A cornerstone result of Schur \cite{SCHUR} states that for sufficiently large integer $n$ and a fixed integer $r$, any $r$-coloring of $[n] := \{1,2,\ldots, n\}$
yields a monochromatic triple $x, y, z$ such that $x + y = z$. For an integer $n\in \mathbb{N}$ a subset $A \subseteq [n]$ is \emph{sum-free} if it has no solution for the equation $x+y=z$, i.e. for all $x,y \in A$ we have $x+y \notin A$. The topic of sum-free sets of integers is well-studied in combinatorial number theory and has a long history.

It is clear that the sets
\begin{align*}
S_1 &=\left\{1,3,5,\ldots,2\left\lfloor\frac{n-1}{2}\right\rfloor+1\right\}\quad \text{ and } \quad S_2 =\left\{\left\lceil\frac{n+1}{2}\right\rceil,\left\lceil\frac{n+1}{2}\right\rceil+1,\ldots,n\right\}
\end{align*}
are sum-free and of size $\left\lceil \frac{n}{2} \right\rceil.$ If $n$ is even, $S_3 =S_2-1$ is another one of the same size. Let us denote the density of a maximum sum-free subset of $[n]$ by $\mu([n]):=\max\{\frac{|S|}{n}\mid S\subseteq [n], \textit{S is sum-free}\}$. If $S\subseteq [n]$ is a sum-free set and $a\in S$ is the largest element, then at most one of $x$ or $a-x$ can be in $S$ for each $x\leq a.$ Therefore $|S|\leq \left\lceil \frac{a}{2} \right\rceil \leq \left\lceil \frac{n}{2} \right\rceil$. Together with the above examples, we see that
\[
\mu([n]) = \left\{
\begin{array}{ll}
      \frac{1}{2} &\text{if $n$ even}, \\[1ex]
      \frac{1}{2}+\frac{1}{2 n}  &\text{if $n$ odd}. \\
\end{array}
\right.
\]
\subsection{Structure for large sum-free sets}
Given the extremal result, great efforts has been made to better understand
the general structure of large sum-free sets in $[n]$. The first result on this topic was due to Freiman \cite{Freiman92} who showed that if the size of a sum-free set in $[n]$ is large enough, then it will either consist of all odd numbers as in $S_1$ above or it will be close to the second half of the interval as $S_2$. We remark that more structural results are
known for large sum-free sets in the $1$-dimensional integer lattice (see \cite{DE99} and a recent progress~\cite{Tuan18}). Such structural results are not only interesting on their own; they have been utilized e.g. in recent work on enumerating maximal sum-free sets (see \cite{BLST}).

The problem of sum-free sets has been generalized to higher dimensional lattice $\mathbb{Z}^d$, $d\ge2$. Similarly, we define
$\mu([n]^d):=\max\{\frac{|S|}{n^d}\mid S\subseteq [n]^d \textit{ is sum-free} \}$. In particular, for $d=2$, the problem of finding the largest sum-free subset of $[n]^2=\{1, 2,\ldots, n\}^2$ was firstly presented by Cameron as an
unsolved problem in \cite{cameron19}.

\begin{conj}\emph{\cite{cameron19}}\label{conj1}
  There exists a constant $c\in \mathbb{R}$ such that $\mu([n]^2)=c+O(1/n).$
\end{conj}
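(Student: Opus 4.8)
The plan is to prove Conjecture~\ref{conj1} with $c=\tfrac{3}{5}$, by establishing the two matching estimates
\[
\tfrac{3}{5}n^2-O(n)\ \le\ \mu([n]^2)\,n^2\ \le\ \tfrac{3}{5}n^2+O(n);
\]
dividing by $n^2$ then gives $\mu([n]^2)=\tfrac{3}{5}+O(1/n)$ directly, so no separate subadditivity argument for the existence of the limit is needed.

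For the lower bound I would exhibit the \emph{diagonal stripe} $A=\bigl\{(x,y)\in[n]^2:\tfrac{4}{5}n<x+y\le\tfrac{8}{5}n\bigr\}$. Since $\phi(x,y):=x+y$ is a group homomorphism $\mathbb{Z}^2\to\mathbb{Z}$, any $p,q\in A$ satisfy $\phi(p+q)=\phi(p)+\phi(q)>\tfrac{8}{5}n$, so $p+q$ either lies outside $[n]^2$ or strictly above the stripe; either way $p+q\notin A$, so $A$ is sum-free. Counting, $|A|$ is $n^2$ minus the corner triangle $\{x+y\le\tfrac{4}{5}n\}$ of area $\tfrac12(\tfrac{4}{5}n)^2=\tfrac{8}{25}n^2$ and the corner triangle $\{x+y>\tfrac{8}{5}n\}$ of area $\tfrac12(\tfrac{2}{5}n)^2=\tfrac{2}{25}n^2$, hence $|A|=\tfrac{15}{25}n^2-O(n)=\tfrac{3}{5}n^2-O(n)$.

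For the upper bound I would slice $[n]^2$ into the anti-diagonals $\ell_k=\{(x,y)\in[n]^2:x+y=k\}$, $k=2,\dots,2n$, of sizes $m_k:=|\ell_k|=\min(k-1,\,2n+1-k)$, and record, for a sum-free $A$, the profile $a_k:=|A\cap\ell_k|$ together with the set $P_k\subseteq[n]$ of $x$-coordinates of $A\cap\ell_k$ (so $|P_k|=a_k\le m_k$). Sum-freeness translates cleanly: for all $i,j$ with $i+j\le 2n$ one has $(P_i+P_j)\cap P_{i+j}=\varnothing$, since a common element $p+q$ would exhibit the lattice point $(p,i-p)+(q,j-q)\in\ell_{i+j}\cap[n]^2$ lying in $A$. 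As $P_{i+j}$ is contained in the window $W_{i+j}:=[\max(1,i+j-n),\min(n,i+j-1)]$ of $x$-coordinates actually realised on $\ell_{i+j}$, this gives $a_{i+j}\le|W_{i+j}|-\bigl|(P_i+P_j)\cap W_{i+j}\bigr|$, and a Cauchy--Davenport/interval-doubling bound $|P_i+P_j|\ge a_i+a_j-1$ makes it bite: already the diagonal case $i=j$ forces a trade-off between $a_i$ and $a_{2i}$, and iterating along the chains $i,2i,3i,\dots$ (through the triples $(P_i,P_{2i},P_{3i}),(P_i,P_{3i},P_{4i}),\dots$) propagates the restriction up the diagonals.

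The final step is to show that the resulting optimisation — maximise $\sum_k a_k$ over profiles with $0\le a_k\le m_k$ and the sumset constraints above — has value $\tfrac{3}{5}n^2+O(n)$, attained up to $O(n)$ only by the stripe profile $a_k=m_k\cdot\mathbf{1}[\tfrac{4}{5}n<k\le\tfrac{8}{5}n]$. This is the crux and I expect it to be the main obstacle, because the sum-free condition is genuinely about the \emph{location} of the sumsets $P_i+P_j$ inside $\ell_{i+j}$, not merely about the sizes $a_k$: a size-only relaxation is too weak (it would, for instance, wrongly allow filling a short low diagonal at the expense of a long one inside the stripe), so one must track which sub-interval of $W_{i+j}$ the sumset occupies and use that the many triple constraints overlap. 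Concretely, one argues that any appreciable mass of $A$ on the low diagonals ($x+y<\tfrac{4}{5}n$) forces geometric decay of the $a_k$ higher up through repeated doubling, that mass near the corner $(n,n)$ ($x+y>\tfrac{8}{5}n$) is self-limiting for the same reason, and that no ``hybrid'' configuration (part all-odd-type, part stripe-type) can exceed $\tfrac{3}{5}$. Combining this with the construction yields $\mu([n]^2)=\tfrac{3}{5}+O(1/n)$, i.e.\ Conjecture~\ref{conj1} with $c=\tfrac{3}{5}$.
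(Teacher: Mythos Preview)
First, note that this paper does not itself prove Conjecture~\ref{conj1}: it attributes the resolution to Elsholtz and Rackham~\cite{E-R} and only imports from that work the structural bounds recorded here as Lemma~\ref{lem-ER-structure}. So the relevant comparison is between your anti-diagonal slicing approach and the \emph{upper-boundary/pairing-set} method of~\cite{E-R} that this paper relies on. That method is geometric rather than arithmetic: one locates a supporting line $y=mx+c$ above which $S$ has no points (Definition~\ref{upper}), excludes the corresponding triangle $A$, and then, for a well-chosen boundary point $(x_1,y_1)\in S$, excludes half of the rectangular pairing set $\{(x,y):x\le x_1,\,y\le y_1\}$ (Lemma~\ref{lem-pairing-set}) together with further pairing regions $T_1,T_2$ (Lemma~\ref{lem-pairing}). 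The resulting upper bound is an explicit function of $(m,c,x_1,y_1)$ which one then optimises analytically; the maximum $\tfrac{3}{5}n^2+O(n)$ falls out of a short calculus exercise (cf.\ the Lagrange-multiplier computations in Lemma~\ref{lem-upper-boundary-line}). Your approach is genuinely different in spirit---you reduce to a one-dimensional sumset problem on the profiles $P_k$---and the lower-bound half is fine.

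The gap is exactly where you flag it. The constraints you extract, namely $|P_{i+j}\cap W_{i+j}|\le |W_{i+j}|-|(P_i+P_j)\cap W_{i+j}|$ together with $|P_i+P_j|\ge a_i+a_j-1$, do not by themselves pin down the optimum at $\tfrac{3}{5}n^2$: as you observe, a size-only relaxation is strictly too weak, and the ``track which sub-interval of $W_{i+j}$ the sumset occupies'' programme you sketch is the entire difficulty, not a detail. The doubling chain $i,2i,3i,\dots$ gives information only on geometrically sparse diagonals and does not control the bulk of $\sum_k a_k$; the assertion that low-diagonal mass ``forces geometric decay higher up'' is not justified (a single nonempty $P_i$ with small $a_i$ barely constrains $a_{2i}$, and there is no mechanism linking different chains). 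Nor is there any argument ruling out hybrid configurations. In short, the proposal correctly identifies the problem but supplies no mechanism for the crucial optimisation; the upper-boundary method succeeds precisely because it replaces this intractable profile optimisation by a two- or three-variable calculus problem.
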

Cameron later \cite{cam2} suggested that Conjecture \ref{conj1} is true with $c=0.6$ and gave a lower bound construction: $$S_0=\{(x,y)\in[n]^2\mid u\le x+y\le 2u-1\},$$ which has maximum density $0.6$ when  $u=\lfloor\frac{4n+7}{5}\rfloor$. Recently, Elsholtz and Rackham settled Conjecture \ref{conj1} in \cite{E-R}, proving that indeed
  $$\mu([n]^2)=0.6+O(1/n).$$
In the same paper, Elsholtz and Rackham \cite{E-R} raised the problem of classifying the sum-free sets whose size are close to the extremal value.

In this paper, we resolve this problem by showing that any sum-free subset $S\subseteq [n]^2$ of size at least $(\frac{3}{5}-o(1))n^2$ will have all its points in the region $\{(x,y) \in [n]^2$ $|$ $\frac{4n}{5}-o(n) \leq x+y < \frac{8n}{5}+o(n)\}.$

\begin{theorem}\label{mainthm}
For all $\gamma > 0$ there exists $\delta > 0$ and $n_0 \in \mathbb{N}$ such that the following holds for all $n>n_0$. If $S \subseteq [n]^2$ is sum-free with $|S| > (\frac{3}{5}-\delta)n^2$, then
$$ S\subseteq\{(x,y) \in [n]^2\mid
\frac{4n}{5}-\gamma n \leq x+y < \frac{8n}{5}+\gamma n\}.$$
\end{theorem}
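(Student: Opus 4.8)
The plan is to first prove an \emph{approximate} version of the statement --- that every near-maximiser $S$ agrees with Cameron's extremal set $S_0$ on all but $o(n^2)$ points --- and then to bootstrap this to the exact containment. Throughout I write $A_t=S\cap\{(x,y)\in[n]^2:x+y=t\}$ and $a_t=|A_t|$ for $2\le t\le 2n$; the $t$-th anti-diagonal of $[n]^2$ has $L_t:=\min\{t-1,\,2n+1-t\}$ lattice points, so $0\le a_t\le L_t$ and $\sum_t L_t=n^2$. Cameron's set corresponds to $a_t=L_t$ for $\tfrac45 n\le t\le\tfrac85 n$ and $a_t=0$ otherwise, and an elementary optimisation shows that among all ``stripes'' $\{u\le x+y\le 2u-1\}$ --- the generic interval-type sum-free sets --- the quantity $\sum_{t=u}^{2u-1}L_t$ is \emph{uniquely} maximised at $u=\lfloor 4n/5\rfloor$, with value $(\tfrac35-o(1))n^2$; this identifies the target shape and, crucially, the identity $\tfrac85=2\cdot\tfrac45$ that forces the upper bound of the stripe.

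\emph{Step 1 (approximate structure).} The basic tool is a Cauchy--Davenport inequality along anti-diagonals: if $a_{t_1},a_{t_2}\ge 1$ and $t_1+t_2\le n+1$, then $A_{t_1}+A_{t_2}$ lies on the $(t_1+t_2)$-th anti-diagonal, lies entirely inside $[n]^2$, is disjoint from $A_{t_1+t_2}$ by sum-freeness, and has size at least $a_{t_1}+a_{t_2}-1$, so $a_{t_1}+a_{t_2}+a_{t_1+t_2}\le t_1+t_2$; when $t_1+t_2>n+1$ a similar but weaker inequality holds, with an error term bounding the part of $A_{t_1}+A_{t_2}$ that falls outside the grid. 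I would feed these inequalities over all admissible pairs, together with $\sum_t a_t>(\tfrac35-\delta)n^2$, into an analysis of the resulting (conditional) linear program in the variables $a_t$; equivalently, one may pass to the scaling limit --- using the arithmetic removal lemma for $x+y=z$ to reduce to genuinely sum-free sets --- and classify the Lebesgue-measure-$\tfrac35$ sum-free subsets of $[0,1]^2$. Either route should produce $\ep=\ep(\delta)\to 0$ with
\[
|S\,\triangle\,S_0^{\ep}|=o_\delta(n^2),\qquad S_0^{\ep}:=\bigl\{(x,y)\in[n]^2:(\tfrac45-\ep)n\le x+y\le(\tfrac85+\ep)n\bigr\}.
\]
I expect this step to be the main obstacle: the boundary losses in the anti-diagonal Cauchy--Davenport inequality, and the verification that the stripe is the \emph{unique} extremiser of the continuous problem (or that no ``fake'' profile beats $\tfrac35 n^2$), are where the real difficulty lies; additional small-doubling/Kneser-type input may be needed to handle the boundary pairs.

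\emph{Step 2 (no low points).} Suppose $(p,q)\in S$ with $p+q\le(\tfrac45-\gamma)n$. By Step 1, all but $o(n)$ of the anti-diagonals meeting $S_0^{\ep}$ are nearly full, i.e.\ $a_t\ge L_t-o(n)$. For each such $t$ the translate $A_t+(p,q)$ supplies at least $L_t-(p+q)-o(n)$ lattice points of the $(t+p+q)$-th anti-diagonal inside $[n]^2$, none of which can lie in $S$, so $a_{t+p+q}\le L_{t+p+q}-L_t+(p+q)+o(n)$. Summing these deficiencies as $t$ runs over the nearly-full anti-diagonals for which $t+p+q$ still lies in $[(\tfrac45-\ep)n,(\tfrac85+\ep)n]$ --- a block of at least $\gamma n$ anti-diagonals, on each of which $L_t-(p+q)=\Omega(\gamma n)$ --- together with the unavoidable deficiency $\sum_{t\notin S_0^{\ep}}L_t=(\tfrac25-o_\delta(1))n^2$ from outside the stripe, I get $|S|\le(\tfrac35-\tfrac12\gamma^2+o_\delta(1))n^2$. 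Choosing $\delta=\delta(\gamma)$ small enough that $\delta<\tfrac14\gamma^2$ and $\ep(\delta)$ is negligible then contradicts $|S|>(\tfrac35-\delta)n^2$. Hence $x+y\ge(\tfrac45-\gamma)n$ for every $(x,y)\in S$.

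\emph{Step 3 (no high points).} Suppose $(p,q)\in S$ with $p+q\ge(\tfrac85+\gamma)n$; then $p,q\ge(\tfrac35+\gamma)n$. Since $S$ is sum-free, $(p,q)$ is not a sum of two members of $S$, so in the box $B=[1,p-1]\times[1,q-1]$ at most one of each complementary pair $\{(a,b),(p-a,q-b)\}$ lies in $S$, giving $|S\cap B|\le\tfrac12|B|+O(1)$. On the other hand, a short area estimate using $p+q\ge(\tfrac85+\gamma)n$ and $p,q\le n$ yields $|S_0\cap B|\ge\tfrac12|B|+c\gamma n^2$ for an absolute constant $c>0$: for $(p,q)$ at the extreme coordinate-sum $\tfrac85 n$ one has $B\approx[0,\tfrac45n]^2$, which $S_0$ fills to density exactly $\tfrac12$, and pushing $(p,q)$ beyond coordinate-sum $\tfrac85 n$ tips the balance strictly past $\tfrac12$. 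By Step 1, $|S\cap B|\ge|S_0\cap B|-o_\delta(n^2)>\tfrac12|B|+O(1)$ once $\delta$ (hence $\ep(\delta)$) is small enough, a contradiction. Hence $x+y<(\tfrac85+\gamma)n$ for every $(x,y)\in S$, and together with Step 2 this proves the theorem, with $\delta=\delta(\gamma)$ chosen small enough that all of the ``small'' quantities above are dominated by $\gamma^2$.
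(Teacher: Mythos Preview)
Your approach is genuinely different from the paper's, and the bootstrap in Steps~2 and~3 is plausible conditional on Step~1. But Step~1 is a real gap, not a technicality. You are asking for the statement ``$|S\triangle S_0^{\varepsilon}| = o_\delta(n^2)$'' as an \emph{input}, yet this approximate-structure statement is essentially equivalent to the theorem itself: once one knows $S$ is contained in the stripe and $|S|\ge(\tfrac35-\delta)n^2$, the symmetric-difference bound is automatic, and conversely Step~1 plus your bootstrap gives the theorem. So Step~1 is not a lemma on the way to the theorem but rather a reformulation of it. The two routes you sketch for Step~1 both have serious holes. The Cauchy--Davenport inequality $a_{t_1}+a_{t_2}+a_{t_1+t_2}\le t_1+t_2$ only holds when $t_1+t_2\le n+1$, whereas the extremal stripe lives at $t\ge \tfrac45 n$, so every pair taken from the relevant range has $t_1+t_2\ge \tfrac85 n$ and falls into the ``boundary loss'' regime you flag; it is not clear the resulting weakened inequalities determine the stripe as the unique approximate optimiser. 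The alternative route --- pass to the continuous problem and classify the measure-$\tfrac35$ sum-free sets in $[0,1]^2$ --- again presupposes a stability statement that is not in the literature: Elsholtz--Rackham proved only the extremal bound $\mu([n]^2)=\tfrac35+O(1/n)$ and explicitly left stability open, which is precisely what the present paper establishes.

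For comparison, the paper avoids any symmetric-difference statement. It works inside the Elsholtz--Rackham geometric framework: first, a Lagrange-multiplier analysis of the E--R upper bounds (their Type~1/Type~2 dichotomy) shows that any near-extremal $S$ has an upper boundary line $\varepsilon$-close to $x+y=\tfrac85 n$, which already gives the ``no high points'' conclusion directly. Second, a further pairing/translate argument locates a single point of $S$ that is $\beta$-close to $(\tfrac45 n,\tfrac45 n)$. Third, using this one point together with pairing sets and a case analysis of the overlap region $\mathcal{O}$ between $(D_1\cup D_2)-p_0$ and its reflection through $(x_1,y_1)$, the paper rules out any point $p_0$ with coordinate sum below $(\tfrac45-\gamma)n$. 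Nowhere is a global ``$S$ is close to $S_0$'' hypothesis invoked; the argument extracts just enough structure (one line, one point) to drive the pairing-set contradiction. Your Steps~2 and~3 would be attractive replacements for this case analysis if Step~1 were available, but as written Step~1 is the whole difficulty and has not been addressed.
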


This gives a satisfying answer to the 2-dimension sum-free problem. The situation is, however, unclear for higher dimension. In particular, even the maximum density of a sum-free set in the 3-dimension grid $[n]^3$ is unknown.

\subsection{$(p,q)$-sum-free sets}
Given positive integers $d,n$ and rational numbers $p,q$, a set $S\subseteq[n]^d$ is called $(p,q)$-\emph{sum-free} if it has no solution for the equation $px+qy=z$. As a generalization of sum-free sets (i.e. (1,1)-sum-free sets), the notion of $(p,q)$-sum-free sets encapsulates many fundamental topics in combinatorial number theory.  In particular, for $d=1$, a $(\frac{1}{2},\frac{1}{2})$-sum-free set is precisely a set without $3$-term arithmetic progression, which has received considerable attention in recent decades.
Therefore, it is a natural question to determine the size of the largest $(p,q)$-sum-free sets in $[n]^d$. Here one can similarly define $$\mu_{[p,q]}([n]^d):=\max\left\{\frac{|S|}{n^d}\mid S\subseteq [n]^d \textit{ is $(p,q)$-sum-free}\right\}.$$
By Roth's theorem \cite{roth53}, $\mu_{[1/2,1/2]}([n])=o(1)$. See \cite{bloom16} for the best known upper bound for the size of a $(1/2,1/2)$-sum-free set. In \cite{ru1,ru2}, instead of the form $x+y=z$, Ruzsa instigated the study of more general linear equations $a_1x_1 +\cdots + a_kx_k = b$. In particular, for more general cases when $p,q$ are positive integers and $p\ge2$, Hancock and Treglown \cite{HAN17} completely determined the value $\mu_{[p,q]}([n])$. For higher dimensional lattices, Choi, Kim and Park \cite{cho} initiated the investigation of the form $x_1+x_2+\cdots+x_k=b$, where $b$ is a prescribed point in $[n]^2$.

For 2-dimension $(p,q)$-sum-free problem, we make the first attempt to determine $\mu_{[p,p]}([n]^2)$ for any integer $p$.
\begin{theorem}\label{thm2}
  Let $p\in\bN$ and $S\subseteq [n]^2$ be a $(p,p)$-sum-free set. Then $$|S|\leq\Big(1-\frac{2}{4p^2+1}\Big)n^2+O(n).$$
\end{theorem}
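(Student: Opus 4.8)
The starting point is the trivial reformulation that $S\subseteq[n]^2$ is $(p,p)$‑sum free exactly when $S$ is disjoint from $\{px+py:x,y\in S\}$, so that
\[
|S|\ \le\ n^2-\bigl|\{px+py:x,y\in S\}\cap[n]^2\bigr|;
\]
the task is to bound from below the number of points this forbids, and the plan is to do so one anti‑diagonal at a time. The constant $\tfrac{2}{4p^2+1}$ is realised by the diagonal strip $T=\{(x,y)\in[n]^2:a\le x+y<2pa\}$ with $a=\lfloor\tfrac{4p}{4p^2+1}n\rfloor$, a natural relative of Cameron's construction: $T$ is $(p,p)$‑sum free (if two of its points are scaled by $p$ and added, the result has coordinate sum at least $2pa$, hence lies outside $T$), and an area count gives $|T|=\bigl(1-\tfrac{2}{4p^2+1}\bigr)n^2+O(n)$; so the theorem is the assertion that $T$ is asymptotically optimal.

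For $2\le k\le 2n$ set $\ell_k=\{(x,y)\in[n]^2:x+y=k\}$, $L_k=|\ell_k|=\min\{k-1,n,2n+1-k\}$ and $c_k=|S\cap\ell_k|$, so that $|S|=\sum_k c_k$ and $\sum_k L_k=n^2$. The equation $px+py=z$ carries $\ell_j\times\ell_k$ into $\ell_{p(j+k)}$; parametrising each diagonal by the first coordinate, it sends the integer sets $A_j,A_k$ that record $S$ on $\ell_j,\ell_k$ to the scaled sumset $p(A_j+A_k)$. Every point of $\ell_{p(j+k)}$ of this form whose preimage lies in $[n]^2$ must miss $S$; bounding the number of such points below by the elementary estimate $|A_j+A_k|\ge|A_j|+|A_k|-1$ for sets of integers, and accounting for which part of $\ell_{p(j+k)}$ is genuinely hit once $px+py\in[n]^2$ is imposed, one obtains the family of linear inequalities
\[
c_{p(j+k)}+c_j+c_k\ \le\ p(j+k)+O(p)\qquad\text{whenever }c_j,c_k\ge1\text{ and }p(j+k)\le2n .
\]

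It then remains to show that any $(c_k)$ with $0\le c_k\le L_k$ obeying these inequalities satisfies $\sum_k c_k\le\bigl(1-\tfrac{2}{4p^2+1}\bigr)n^2+O(n)$ — a one‑dimensional linear program. Let $a$ be the least index with $c_a\ge1$. If $a>n$ then $\sum_k c_k\le\sum_{k=n+1}^{2n}L_k=\tfrac{n^2}{2}+O(n)$, which is already below the bound since $p\ge1$; so assume $a\le n$. The empty diagonals below $a$ account for $\sum_{k<a}L_k=\tfrac{a^2}{2}-O(n)$ forbidden points, and it suffices to show that the diagonals of index $\ge a$ force a further $\tfrac{2}{4p^2+1}n^2-\tfrac{a^2}{2}-O(n)$. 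This follows by summing the inequalities above over the pairs $(a,k)$, $k\ge a$, with suitable nonnegative weights — using $c_k\le L_k$, and the fact that the inequality is automatic when $c_{p(j+k)}=0$, to absorb gaps in the support — the weights being calibrated so the resulting bound is tight precisely for $c_k=L_k\cdot\mathbf 1\{a\le k<2pa\}$. Equivalently, one verifies directly that the diagonal strip $T$ is the optimum of this program, which forces $a\approx\tfrac{4p}{4p^2+1}n$ and the ratio $2p$ between the top and bottom of the extremal strip.

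The main obstacle is precisely this last step: producing the correct nonnegative combination of the diagonal inequalities (a dual certificate) that isolates the constant $\tfrac{2}{4p^2+1}$, since this is where the extremal configuration actually gets pinned down. Everything else — the boundary/window corrections in the derivation of the inequalities, the floor functions, and the restriction of the inequalities to the support of $(c_k)$ — is routine, and is what the $O(p)$ and $O(n)$ error terms absorb.
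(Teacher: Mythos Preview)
Your diagonal approach is genuinely different from the paper's method, which is purely two--dimensional: the paper picks out an upper boundary line of $S$ and, via case analysis on its slope and intercepts, combines the triangular region above the line with a $p$--pairing rectangle (and in one case a pair of auxiliary triangles $T_1,T_2$) to force the bound, with the constant $1-\tfrac{2}{4p^2+1}$ emerging from a two--variable optimisation.

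Unfortunately the one--dimensional reduction loses too much: the linear program you describe does \emph{not} have optimum $(1-\tfrac{2}{4p^2+1})n^2$ for $p\ge 2$, so the ``main obstacle'' you isolate --- finding the dual certificate --- is in fact unachievable. Concretely, take $p=2$ and consider the profile
\[
c_k \;=\; L_k\cdot \mathbf 1\!\left[k\ge \tfrac{n}{3}\right],\qquad 2\le k\le 2n.
\]
Any constraint with $c_j,c_k\ge 1$ and $m=2(j+k)\le 2n$ forces $j,k\in[\tfrac{n}{3},\tfrac{2n}{3}]$, hence $m\in[\tfrac{4n}{3},2n]$, and then
\[
c_m+c_j+c_k=(2n+1-m)+(j-1)+(k-1)=2n-1-\tfrac{m}{2}\le m
\]
holds because $m\ge \tfrac{4n}{3}$. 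So this profile is feasible for your LP, yet
\[
\sum_k c_k \;=\; n^2-\sum_{k<n/3}L_k \;=\; \Bigl(1-\tfrac{1}{18}\Bigr)n^2+O(n)\;=\;\tfrac{17}{18}n^2+O(n),
\]
which exceeds the target $\tfrac{15}{17}n^2$. (For general $p\ge 2$ the same computation with threshold $a=\tfrac{n}{2p-1}$ gives an LP--feasible profile of size about $\bigl(1-\tfrac{1}{2(2p-1)^2}\bigr)n^2>\bigl(1-\tfrac{2}{4p^2+1}\bigr)n^2$.) The point is that this profile does not come from any $(p,p)$--sum--free set --- e.g.\ $2\cdot(\tfrac{n}{6},\tfrac{n}{6})+2\cdot(\tfrac{n}{6},\tfrac{n}{6})=(\tfrac{2n}{3},\tfrac{2n}{3})$ --- but your inequalities cannot see this, because after scaling by $p$ the forbidden first coordinates $p(A_j+A_k)$ occupy only a $\tfrac{1}{p}$--fraction of the points on $\ell_m$ and can fall largely outside $[n]^2$ when $m>n$. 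The ``$O(p)$ boundary correction'' you invoke is therefore not valid in that range, and even if one simply accepts the inequalities as stated, they are too coarse: the two--dimensional geometry that the paper exploits is essential.
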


We observe that the upper bound in Theorem \ref{thm2} is optimal up to the error term $O(n)$, given by the following construction. For any positive integers $p,q$ and positive real $a$, define $S=\{(x,y)\in[n]^2\mid   a<x+y<(p+q)a\}$. One can easily check that $S$ is $(p,q)$-sum-free with size $$|S|=\left(1-\frac{2}{(p+q)^2+1}\right)n^2+O(n),$$
when $a=\frac{2(p+q)}{(p+q)^2+1}n$, corresponding to the stripe $$S=\left\{(x,y)\in[n]^2\mid   \frac{2(p+q)}{(p+q)^2+1}n<x+y<\frac{2(p+q)^2}{(p+q)^2+1}n\right\}.$$

We conjecture that for all integers $p$ and $q$, the above construction provides the maximum $(p,q)$-sum-free set.
\begin{conj}
Let $p,q$ and $n$ be positive integers and $S\subseteq [n]^2$ be a $(p,q)$-sum-free set. Then $$|S|\leq\left(1-\frac{2}{(p+q)^2+1}\right)n^2+O(n).$$
\end{conj}

\medskip

\noindent\textbf{Organization.} The rest of the paper will be organized as follows. Section~\ref{sec-prelim} includes some notation and tools needed. Section~\ref{sec-main} is devoted to the proof of Theorem \ref{mainthm}. The proof of Theorem \ref{thm2} is given in
Section~\ref{sec-pqSF}.


\section{Preliminaries}\label{sec-prelim}
Given a convex polygon $P$ in $\mathbb{R}^2_{[0,n]}$, denote by $\Lambda(P)$ the number of lattice points contained within $P$, and by $\|P\|$ the area of $P$ with respect to the Lebesgue measure. The \emph{translate} of $P$ by a vector $a\in \mathbb{R}^2_{[0,n]}$ is denoted as $P+a:=\{a+(x,y)\mid (x,y)\in P\}$. Write $a-P:=\{a-(x,y)\mid (x,y)\in P\}$. Throughout the proof, we always use the following result which is a corollary of Lemma 3.1 in \cite{E-R}.
\begin{lemma}\label{lem-point-area}
If $P$ is a convex polygon in $\mathbb{R}^2_{[0,n]}$ with finitely many sides, then $\Lambda(P)=\|P\|+O(n)$.
\end{lemma}
This lemma implies that any convex polygon $P$, described above, satisfies that $\Lambda(P)=\|P\|+O(n)$, which allows us to focus on the area $\|P\|$ instead of $\Lambda(P)$.

For two points $p_1,p_2\in \mathbb{R}^2_{[0,n]}$, denote by $m(p_1,p_2)$ the gradient and by $c(p_2,p_2)$ the $y$-intercept of the line in $\mathbb{R}^2$ passing through $p_1$ and $p_2.$

\begin{defn}[Upper boundary]\label{upper}
 Given a set $A\subseteq \mathbb{R}^2_{[0,n]}$, the \textit{upper boundary} of $A$ is a set of points in $A$, denoted by $\partial A$, such that for each $p_1\in \partial A$ there exists a point $p_2\in A\setminus \{p_1\}$ with the following properties:
\begin{itemize}
    \item $m(p_1,p_2)<0$;
    \item Let $T = \{(x,y)\subseteq \mathbb{R}^2_{[0,n]}\mid  y>m(p_1,p_2)x+c(p_1,p_2)\}$. Then $|A\cap T|=0$.
\end{itemize}
Any two such points $p_1,p_2$ are said to be \textit{adjoint}, and the line passing through two points that are adjoint is called an \textit{upper boundary line}. The second condition above states that there is no point of $A$ strictly above any upper boundary line.
\end{defn}

The following lemma shows that if the upper boundary of a set $A$ is empty, then $A$ has a `top right corner'.

\begin{lemma}[Lemma 5.1 in \cite{E-R}]\label{onepoint}
Suppose $A\subseteq \mathbb{R}^2_{[0,n]}$ such that $\partial A=\emptyset$. Then there is a point $(a,b)\in A$ such that $a\geq x$ and $b \geq y$ for all $(x,y) \in A.$
\end{lemma}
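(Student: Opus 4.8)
The statement is almost a tautology once unpacked, so the plan is to argue by contradiction: suppose no such "top right corner" exists, and extract from this failure a pair of adjoint points, contradicting $\partial A = \varnothing$. First I would handle the trivial reduction: $A$ is nonempty (if $A = \varnothing$ the claim is vacuous, or we interpret it accordingly), and $A \subseteq \mathbb{R}^2_{[0,n]}$ is bounded, so the coordinate functionals are bounded on $A$. Consider the supremum $\alpha = \sup\{x : (x,y) \in A\}$ of the first coordinate and, among points approaching this supremum, the behaviour of the second coordinate; the aim is to locate an actual point of $A$ that is maximal in both coordinates simultaneously.

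The key step is to show that if there is no coordinatewise-maximum point, then two points of $A$ must be adjoint. Suppose $(a,b) \in A$ and it is \emph{not} a top right corner: then there is $(x,y) \in A$ with $x > a$ or $y > b$. If for \emph{every} point of $A$ there is another point of $A$ lying strictly to its upper-left (i.e. with strictly smaller $x$-coordinate and strictly larger $y$-coordinate), then picking any such pair $p_1, p_2$ gives $m(p_1,p_2) < 0$; to verify the second bullet of Definition \ref{upper} I would take a point $p_1$ that is "highest" in an appropriate sense — e.g. maximising $y + \epsilon x$ for small $\epsilon > 0$, or more robustly maximising the $y$-intercept $y - m x$ over the relevant negative slope — so that no point of $A$ lies strictly above the line through $p_1$ and $p_2$. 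The careful version: since $A$ is bounded, for each fixed negative slope $m$ the $y$-intercept functional $(x,y) \mapsto y - mx$ attains values bounded above on $A$; one shows that either the supremum is attained at a point with a genuine partner below-and-to-the-right (producing an upper boundary line, contradiction), or the failure to attain it forces a coordinatewise maximum.

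The main obstacle is the interplay between suprema that may not be attained and the requirement that adjoint points be \emph{actual elements of $A$}, combined with the fact that $A$ is an arbitrary subset of $\mathbb{R}^2_{[0,n]}$ with no closedness or finiteness hypothesis. I expect to route around this by the following clean argument: let $\alpha = \sup_{(x,y)\in A} x$. If $\alpha$ is not attained, pick any $(x_0, y_0) \in A$; for \emph{any} other point $(x_1,y_1)\in A$ with $x_1 > x_0$ (which exists since $\alpha$ is not attained) we have, if $y_1 < y_0$, that $m((x_0,y_0),(x_1,y_1)) < 0$, and one checks no point lies above — or, if not, we iterate. The genuinely clean route is: among points of $A$ with $x$-coordinate close to $\alpha$, either some point dominates all others (done), or one finds two points realising a strictly decreasing pair with nothing above, i.e. an upper boundary line, contradicting $\partial A = \varnothing$. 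Since the lemma is quoted from \cite{E-R}, I would present only this contradiction skeleton and relegate the $\epsilon$-bookkeeping — choosing the optimising functional and confirming the "nothing strictly above" condition — to a short verification, flagging that the boundedness of $A$ inside $\mathbb{R}^2_{[0,n]}$ is exactly what makes the relevant suprema finite and the argument go through.
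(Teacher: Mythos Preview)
The paper does not prove this lemma; it is quoted from \cite{E-R} without argument, so there is no in-paper proof to compare against.

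On the substance of your proposal: the obstacle you flag --- that $A$ is an arbitrary subset of $\mathbb{R}^2_{[0,n]}$ with no closedness or finiteness hypothesis --- is not a bookkeeping nuisance but is fatal to the statement at that generality. Take $A=\{(x,y): x,y>0,\ x+y<1\}$. For any $p_1\in A$ and any candidate partner $p_2$, the point $p_1+(0,\epsilon)$ lies in $A$ for small $\epsilon>0$ and sits strictly above the line through $p_1,p_2$; hence $\partial A=\varnothing$. Yet no point of $A$ dominates all others coordinatewise. So the ``route around'' you sketch via suprema and optimising linear functionals cannot succeed, and the sentence ``I expect to route around this'' is precisely where the argument breaks.

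In every application in this paper (and in \cite{E-R}) the set is a \emph{finite} subset of the lattice $[n]^2$, and there the lemma is easy: if the point of $A$ with maximal $x$-coordinate (ties broken by maximal $y$) is not a coordinatewise maximum, then the upper-right arc of the convex hull of $A$ contains an edge of strictly negative slope whose two endpoints are points of $A$ with nothing of $A$ strictly above --- an adjoint pair, contradicting $\partial A=\varnothing$. Your contradiction skeleton is correct for that finite setting; the fix is to add the finiteness (or closedness) hypothesis, not to attempt the general case.
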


We also need the concept of pairing sets, which will be frequently used throughout the proof.
\begin{defn}
Given a point $(a_1,a_2)\in \mathbb{R}^2_{[0,n]}$ and a set $P\subseteq \mathbb{R}^2_{[0,n]}$, we call $P$ a \emph{pairing set for} $(a_1,a_2)$ if for any $x\in P$, we have $(a_1,a_2)-x\in P$.
\end{defn}
The following lemma guarantees that for any point in a sum-free set $S$, every pairing set for that point cannot intersect too much with $S$.
\begin{lemma}[Lemma 3.4 in \cite{E-R}]\label{lem-pairing-set}
Let $S$ be a sum-free set in $[n]^2$. Then for any $a\in S$ and a pairing set $P$ for $a$, we have $|P\cap S| \leq \frac{1}{2}\Lambda(P)$.
\end{lemma}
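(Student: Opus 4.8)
The plan is to exploit the single structural feature of a pairing set: by definition, $P$ is invariant under the involution $\iota\colon x\mapsto a-x$ of $\mathbb{R}^2_{[0,n]}$. Combining this symmetry with the sum-freeness of $S$ will immediately yield the bound, so the whole proof is a short orbit-counting argument.

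First I would check that $\iota$ restricts to a genuine involution of the finite set $L:=P\cap\mathbb{Z}^2$. Since $a\in S\subseteq[n]^2$ is a lattice point, $x\in\mathbb{Z}^2$ implies $a-x\in\mathbb{Z}^2$; and $x\in P$ implies $a-x\in P$ by the defining property of a pairing set. Hence $\iota(L)=L$. Note also that $|L|=\Lambda(P)$, and that $P\cap S\subseteq L$ because $S\subseteq[n]^2$ consists of lattice points. Now decompose $L$ into the orbits of $\iota$: every orbit has size $1$ or $2$, and there is at most one orbit of size $1$, namely $\{a/2\}$, which occurs only when both coordinates of $a$ are even.

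The key point is that each $\iota$-orbit contains at most one point of $S$. Indeed, if $x\in S$ and $a-x\in S$, then $x+(a-x)=a\in S$, contradicting that $S$ is sum-free; this rules out a size-$2$ orbit lying entirely in $S$, and, taking $x=a-x=a/2$, it also rules out the possible fixed point of $\iota$ from belonging to $S$ (here we use that sum-freeness forbids $x+x\in S$ for $x\in S$). Therefore $|P\cap S|$ is at most the number of size-$2$ orbits of $\iota$ in $L$.

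Finally, if $L$ splits into $k$ orbits of size $2$ and $\varepsilon\in\{0,1\}$ orbits of size $1$, then $|L|=2k+\varepsilon$, whence $|P\cap S|\le k\le\frac{2k+\varepsilon}{2}=\frac12|L|=\frac12\Lambda(P)$, as required. There is no genuine obstacle in this argument; the only items calling for a moment's care are that the pairing involution is well defined on lattice points and that its unique possible fixed point cannot lie in $S$, both of which are handled above.
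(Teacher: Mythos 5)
Your argument is correct and is exactly the pairing argument the paper has in mind (it states the lemma ``immediately follows from the definition of pairing set'' and defers to \cite{E-R}): partition the lattice points of $P$ into orbits of the involution $x\mapsto a-x$ and note that no orbit can lie entirely in $S$, since $x+(a-x)=a\in S$ would violate sum-freeness. Your extra care about the possible fixed point $a/2$ (excluded because $a/2+a/2=a$) is a worthwhile detail but does not change the approach.
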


The following lemma bounds the intersection of a set and its translate with a sum-free set.
\begin{lemma}\label{lem-pairing}
Given two sets $S,T\subseteq [n]^2$, if $S$ is sum-free, then for any $a\in S$, it holds that
\[ |S \cap (T \cup (a\pm T))| \leq |T|.\]
\end{lemma}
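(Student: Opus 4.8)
The plan is to peel the left-hand side apart by a union bound and then reinterpret the two pieces as sets of lattice points of $T$. First I would write $S\cap(T\cup(a+T))=(S\cap T)\cup(S\cap(a+T))$, so that it suffices to bound $|S\cap T|+|S\cap(a+T)|$ by $\Lambda(T)$. Since $a$ is an integer vector, translation by $a$ is a bijection between the lattice points of $T$ and those of $a+T$; hence, setting $X:=\{t\in T\cap\mathbb{Z}^2:\ t\in S\}$ and $Y:=\{t\in T\cap\mathbb{Z}^2:\ a+t\in S\}$, we get $|S\cap T|=|X|$ and $|S\cap(a+T)|=|Y|$, with $X,Y$ both contained in the set of lattice points of $T$.

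The only thing that then requires an argument is that $X\cap Y=\varnothing$. If a lattice point $t$ lay in $X\cap Y$, then $t\in S$ and $a+t\in S$; but $a\in S$, so sum-freeness of $S$ applied to the pair $(a,t)$ forces $a+t\notin S$, a contradiction. Thus $X$ and $Y$ are disjoint subsets of the $\Lambda(T)$ lattice points of $T$, giving
\[|S\cap(T\cup(a+T))|\ \le\ |S\cap T|+|S\cap(a+T)|\ =\ |X|+|Y|\ \le\ \Lambda(T).\]
For the minus sign the argument is symmetric: put $Y':=\{t\in T\cap\mathbb{Z}^2:\ a-t\in S\}$, note that $t\mapsto a-t$ is a bijection on the relevant lattice points so $|S\cap(a-T)|=|Y'|$, and observe that $t\in X\cap Y'$ would mean $t\in S$ and $a-t\in S$ with $t+(a-t)=a\in S$, again contradicting sum-freeness; hence $|S\cap(T\cup(a-T))|\le\Lambda(T)$ as well.

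I do not anticipate a real obstacle here — this is essentially the pairing principle behind Lemma~\ref{lem-pairing-set} repackaged with a translate in place of a reflection. The two points to be careful about are bookkeeping rather than substance: the symbol $a\pm T$ must be understood as asserting the bound for each choice of sign \emph{separately}, not for the triple union $T\cup(a+T)\cup(a-T)$ (that union can meet $S$ in strictly more than $\Lambda(T)$ lattice points, already when $T$ is a single point and $S$ is, say, the set of points with odd coordinate-sum), so the proof should treat $+$ and $-$ in turn; and when excluding $a+t$ one must use the form of sum-freeness adopted in this paper, namely that $x,y\in S\Rightarrow x+y\notin S$ even for $x=y$, which is exactly the instance $(x,y)=(a,t)$ used above.
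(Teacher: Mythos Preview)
Your argument is correct and is essentially the same pairing idea the paper uses: for each lattice point $t\in T$, sum-freeness with $a\in S$ forbids both $t$ and $a\pm t$ from lying in $S$, so $S$ can pick up at most one from each pair, giving at most $\Lambda(T)$ in total. Your added remark that the $\pm$ must be read as a separate bound for each sign (not for the triple union) is a useful clarification that the paper's one-line proof leaves implicit.
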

\begin{proof}
For each element $t\in T$ there is a corresponding element $a\pm t \in a\pm T$. Since $a\in S$, one can observe from sum-freeness that at most one of $t$ and $a\pm t$ belongs to $S$.
\end{proof}

\section{Proof of Theorem \ref{mainthm}}\label{sec-main}
We carry out the proof in a few steps. First, using Lagrange multiplier, we show that any almost maximum-size sum-free set $S$ in $[n]^2$ has an upper boundary line that is close to the line $y+x=\frac{8n}{5}$, see Lemma~\ref{lem-upper-boundary-line}. Then we show that there is a point $(x^*,y^*)$ in $S$ close to $(\frac{4n}{5},\frac{4n}{5})$, see Lemma~\ref{lem-top-right-point}. Finally, using this point $(x^*,y^*)$, we show in Section~\ref{sec-things-together} that $S$ has no point below the line $y+x=\frac{4n}{5}-o(n)$, which, together with the upper boundary line close to $y+x=\frac{8n}{5}$, implies that $S$ must be close to the extremal stripe $\frac{4n}{5}\le x+y\le \frac{8n}{5}$.

Throughout the proofs, when we write $\beta\ll\gamma$, we always mean that $\beta,\gamma$ are constants in $(0, 1)$, and there exists $\beta_0= \beta_0(\gamma)$ such that the subsequent arguments hold for all $0<\beta \le \beta_0$. Hierarchies of other lengths are defined analogously.

\begin{defn}\label{def1}
	A sum-free set $S\subseteq [n]^2$ with $\partial S\neq \emptyset $ is of \emph{Type} 1 if there exists a point $p_1=(x_1,y_1)\in \partial S$ with $x_1\leq y_1$ and $x_1 y_1 \geq x y$ for all $(x,y) \in \partial S$, and a point $p_2=(x_2,y_2)$ adjoint to $p_1$ satisfying the following conditions, where we simply write $m=m(p_1,p_2)$ and $c=c(p_1,p_2)$.
	\begin{itemize}
		\item [$(1)$] $x_2>x_1$, $y_2<y_1$ and $m<-\frac{y_1}{x_1}\leq -1$;
		\item [$(2)$] $c>n$ and $-c\leq n m$.
	\end{itemize}
In addition, $S$ is of \emph{Type} 2 if there exist two adjoint points $p_1=(x_1,y_1)$ and $p_2=(x_2,y_2)$ in $\partial S$ satisfying the following conditions.
     \begin{itemize}
     	\item [$(1)$] $x_2>x_1$, $y_2<y_1$ and $-\frac{y_1}{x_1}\leq m\leq -\frac{y_2}{x_2}$;
     	\item [$(2)$] $y_2\leq \frac{c}{2} \leq y_1$;
     	\item [$(3)$] $c>n$ and $-c< n m$.
     \end{itemize}
\end{defn}

For either type of the sum-free sets, we call the upper boundary lines passing through $p_1$ and $p_2$ \emph{typical}.
Let
$$A=\{(x,y)\in \mathbb{R}^2_{[0,n]}\mid y>mx+c\}$$
with $m$ and $c$ given as above. Then $A$ is a triangle in both cases.

For the Type 1 set $S$, we claim that the upper boundary line $y=mx+c$ satisfies $x_1>\frac{n}{2}$. In fact, since $m<-\frac{y_1}{x_1}$ and $y_1=mx_1+c$, we have that $x_1>\frac{c}{-2m}>\frac{n}{2}$ because $-c<n m$.

If $S$ is of Type 2, then it is straightforward to check that the following two sets are nonempty (see Figure~\ref{T2}).
 \[T_1=\left \{(x,y) \in \mathbb{R}^2_{[0,n]} \mid   x \geq x_1,y-mx \leq \frac{c}{2} \right \},\] \[T_2=\left \{(x,y) \in \mathbb{R}^2_{[0,n]} \mid   y \geq y_2,y-mx \leq \frac{c}{2} \right \}.\]

\begin{figure}[h]
  \centering
  \includegraphics[width=180pt]{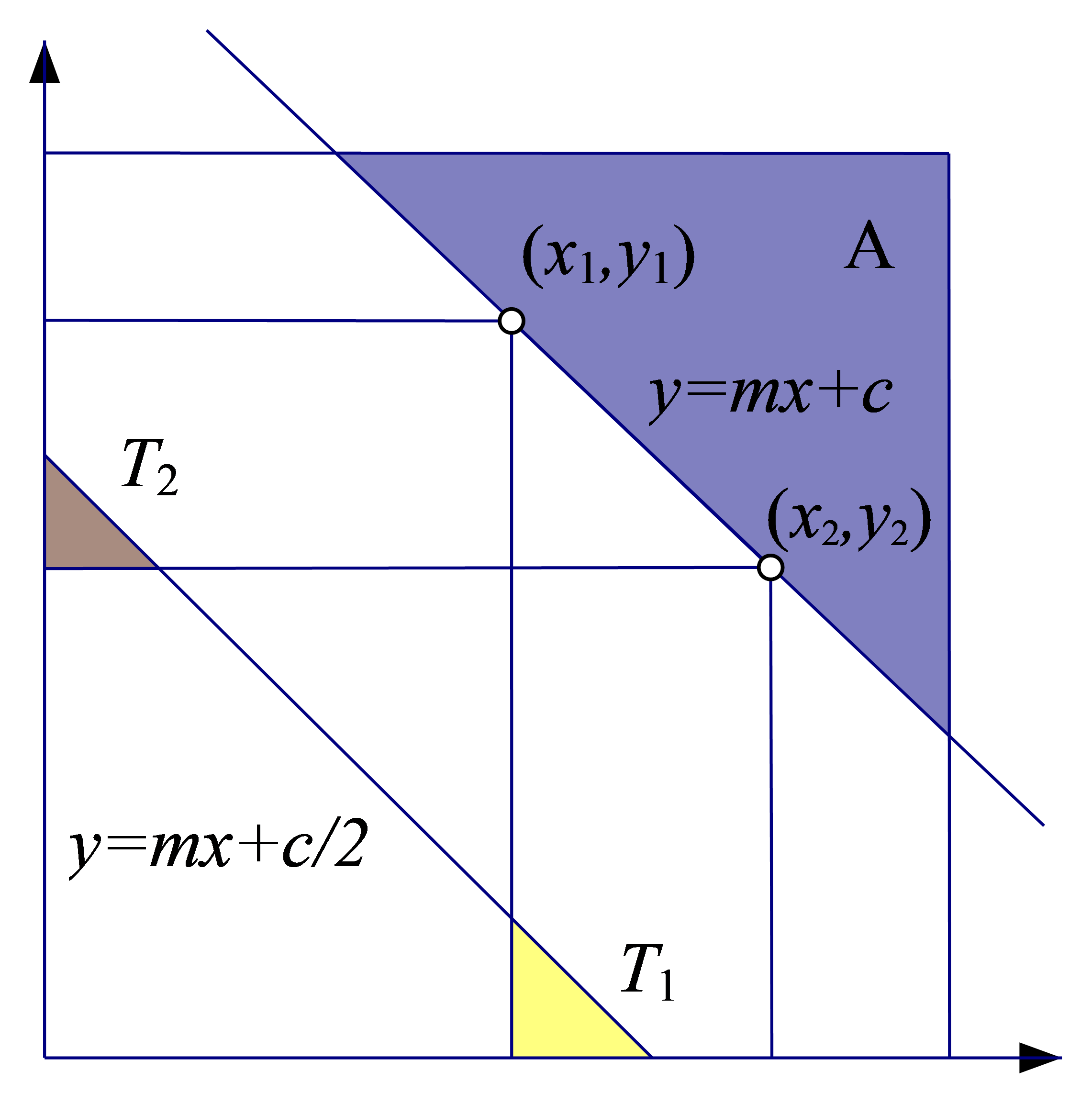}\\
  \caption{$S$ is of Type 2}\label{T2}
\end{figure}

The two types we defined above correspond to the only two cases in~\cite{E-R} that attain the maximum density $\frac{3}{5}$. We will use the following bounds for these two types of sum-free sets.

\begin{lemma}[\cite{E-R}]\label{lem-ER-structure}
	Given a sum-free set $S \subseteq [n]^2$, if $|S|>0.56n^2$, then either
	\begin{itemize}
		\item[\emph{(1)}] $S$ is of Type 1 and $|S| \leq (n+1)^2 - \frac{1}{2} x_1 y_1 + \frac{(c+n m-n)^2}{2m}$, or
		
		\item[\emph{(2)}] $S$ is of Type 2 and $|S|\le  (n+1)^2+\frac{c^2}{8m}+\frac{1}{2m}(n-n m-c)^2.$
	\end{itemize}
\end{lemma}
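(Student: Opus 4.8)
The plan is to establish the Type~1/Type~2 dichotomy first and then read off the two area bounds, throughout converting areas to lattice-point counts via Lemma~\ref{lem-point-area} (every region involved is a convex polygon with a rational-normal side, so $\Lambda(\cdot)=\|\cdot\|+o(n^2)$). \textbf{Step 1 (the boundary is non-trivial).} If $\partial S=\varnothing$, then Lemma~\ref{onepoint} gives a point $(a,b)\in S$ dominating all of $S$, so $S\subseteq[0,a]\times[0,b]$, which is a pairing set for $(a,b)$; Lemma~\ref{lem-pairing-set} then forces $|S|\le\tfrac12(a+1)(b+1)\le\tfrac12(n+1)^2<0.56n^2$ for large $n$, a contradiction. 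So $\partial S\neq\varnothing$, and by the $x\leftrightarrow y$ symmetry I would take $p_1=(x_1,y_1)\in\partial S$ with $x_1\le y_1$ maximising $x_1y_1$ over $\partial S$, together with an adjoint point $p_2=(x_2,y_2)$ which, after relabelling if needed, satisfies $x_2>x_1$ and $y_2<y_1$; write $m=m(p_1,p_2)<0$ and $c=c(p_1,p_2)$.

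\textbf{Step 2 (the slope, and the obstacle).} Since $p_2\in\partial S$, maximality of $x_1y_1$ gives $x_2y_2\le x_1y_1$, so by AM--GM $x_1y_2+x_2y_1\ge 2\sqrt{(x_1y_1)(x_2y_2)}\ge 2x_2y_2$, which rearranges to $m\le -y_2/x_2$. Hence one is always in one of two regimes: either $m<-y_1/x_1$, and then $m<-y_1/x_1\le -1$ because $x_1\le y_1$, which is exactly the slope condition of Type~1; or $-y_1/x_1\le m\le -y_2/x_2$, the slope condition of Type~2. Feeding these through $c=y_1-mx_1=y_2-mx_2$ yields $y_1<c/2$ in the first regime and $y_2\le c/2\le y_1$ in the second, recovering condition~(2) of Type~2 in Definition~\ref{def1}. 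It then remains to pin down $c>n$ and the sign condition $-c\le nm$ (resp.\ strict): if either failed, the triangle $A=\{(x,y)\in\mathbb{R}^2_{[0,n]}:y>mx+c\}$ above the boundary line --- which contains no point of $S$ by Definition~\ref{upper} --- together with the pairing set $R=[0,x_1]\times[0,y_1]$ for $p_1\in S$ would already drag $|S|$ below $0.56n^2$. I expect this to be the main obstacle: it is a short but somewhat delicate optimisation, and the one place where the bare density hypothesis must be converted into the rigid geometry recorded in Definition~\ref{def1}.

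\textbf{Step 3 (the two bounds).} For Type~1, under $c>n$ and $-c\le nm$ the triangle $A$ has vertices $\big(\tfrac{n-c}{m},n\big)$, $(n,n)$, $(n,mn+c)$, so $\|A\|=-\tfrac{(c+nm-n)^2}{2m}$. As $p_1\in S$ lies on $y=mx+c$ and $m<0$, every point of $R$ satisfies $y\le y_1=mx_1+c\le mx+c$, whence $A\cap R=\varnothing$; partitioning $[n]^2$ into $R$ and its complement and combining $|S\cap R|\le\tfrac12\Lambda(R)$ (Lemma~\ref{lem-pairing-set}) with $S\cap A=\varnothing$ gives $|S|\le\Lambda([n]^2)-\tfrac12\Lambda(R)-\Lambda(A)$, i.e.\ the claimed $(n+1)^2-\tfrac12x_1y_1+\tfrac{(c+nm-n)^2}{2m}$ up to lower-order terms (absorbed into $(n+1)^2$). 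For Type~2, again $S\cap A=\varnothing$ and, using $(c+nm-n)^2=(n-nm-c)^2$, $\|A\|=-\tfrac1{2m}(n-nm-c)^2$; the extra ingredient is that $S$ also omits, up to $o(n^2)$ lattice points, the triangle $D=\{(x,y)\in\mathbb{R}^2_{[0,n]}:0\le y\le mx+c/2\}$ near the origin, of area $-\tfrac{c^2}{8m}$. This is what the auxiliary triangles $T_1,T_2\subseteq D$ are for: together with the points $p_1,p_2\in S$ they feed a pairing argument (Lemma~\ref{lem-pairing}, with anchors $p_1$ and $p_2$) showing that $S$ meets at most half of an auxiliary region of area $\approx 2\|D\|$ --- built from $D$ and its translates by $p_1,p_2$ and disjoint from $A$ --- so that a further $\Lambda(D)$ points are removed. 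Combining, $|S|\le\Lambda([n]^2)-\Lambda(D)-\Lambda(A)=(n+1)^2+\tfrac{c^2}{8m}+\tfrac1{2m}(n-nm-c)^2$ up to lower-order terms, as required.
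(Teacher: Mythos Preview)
The paper does not supply a proof of this lemma; it is cited from \cite{E-R}. However, Section~\ref{sec-pqSF} (Cases~2 and~3 in the proof of Theorem~\ref{thm2}) reproves precisely these bounds in the $(p,p)$-setting, and specialising to $p=1$ recovers the intended argument, against which your sketch can be checked. Your Step~1 and the Type~1 half of Step~3 agree with that argument. In Step~2 you tacitly assume that an adjoint $p_2$ with $x_2>x_1$ exists; the paper disposes of the contrary possibility as a separate easy subcase (bounding $|S|\le nx_1-\tfrac12 x_1y_1$ via $P_1$ alone when $x_1\le y_1$).

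The real gap is your Type~2 mechanism. You claim that $S$ omits, up to $o(n^2)$, the triangle $D=\{y\le mx+c/2\}$, and that this follows from Lemma~\ref{lem-pairing} with ``anchors $p_1$ and $p_2$'' applied to $D$ and its translates. That is not how the bound is obtained, and taking $p_1,p_2$ as anchors does not produce a configuration of area $\approx 2\|D\|$ disjoint from $A$. The correct argument (Section~\ref{sec-pqSF}, Case~3, with $p=1$) is a \emph{dichotomy} on $T_1\cap S$. If $T_1\cap S=\varnothing$, one excludes $\Lambda(T_1)$ directly and uses the pairing rectangle $P_1=[0,x_1]\times[0,y_1]$ for $p_1$. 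If instead some $a\in T_1\cap S$ exists, Lemma~\ref{lem-pairing} is applied with anchor $a$ --- not $p_1$ or $p_2$ --- to $T_2$ and its translate $a+T_2$, together with the pairing rectangle $P_2$ for $p_2$; the point of choosing $a\in T_1$ is that it forces $a+T_2$ to lie below $y=mx+c$ and to satisfy $x\ge x_1$, $y\ge y_2$, so that $a+T_2$ is disjoint from each of $A$, $P_2$ and $T_2$. In either branch a two-line expansion using $y_i=mx_i+c$ gives the exact identity $\|T_i\|+\tfrac12\|P_i\|=-c^2/(8m)$, which yields the stated bound. The points $p_1,p_2$ enter only through the rectangles $P_1,P_2$; the translate of $T_2$ must be anchored at a point of $S$ lying in $T_1$, which is why the case split cannot be avoided.
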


\subsection{Fixing an upper boundary line}
Given constants $\varepsilon$ and $C$, we call a line $L$ $\varepsilon$-\emph{close} to the line $x+y=C$ if the portion of $L$ intersecting $\mathbb{R}^2_{[0,n]}$ lies entirely within the set $\{ (x,y) \in \mathbb{R}^2_{[0,n]} \mid |x+y-C| \le \varepsilon n\}.$ Similarly, we call two points $p_1=(x_1,y_1)$ and $p_2=(x_2,y_2)$ $\varepsilon$-\emph{close} to each other if $|x_1-x_2|\le \varepsilon n$ and $|y_1-y_2|\le \varepsilon n$.
\begin{lemma}
\label{lem-upper-boundary-line}
Given $\varepsilon>0$, there exist $\delta > 0$ and $n_0 \in \mathbb{N}$ such that the following holds for all $n>n_0$. If $S \subseteq [n]^2$ is sum-free and $|S| > (\frac{3}{5}-\delta)n^2$, then there is a typical upper boundary line for $S$ which is $\varepsilon$-close to $x+y=\frac{8n}{5}$.
\end{lemma}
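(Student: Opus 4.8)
The plan is to push the two upper bounds supplied by Lemma~\ref{lem-ER-structure} through a constrained optimisation. Density $\tfrac35$ is attained only for one configuration of the parameters describing a typical line, namely the configuration that \emph{is} the line $x+y=\tfrac85 n$; so if $|S|$ is within $\delta n^2$ of $\tfrac35 n^2$, the slope $m$ and intercept $c$ of a typical line should be forced close to their optimal values, whence that line is $\varepsilon$-close to $x+y=\tfrac85 n$. Concretely, fix $\delta$ small enough that $(\tfrac35-\delta)n^2>0.56\,n^2$, so Lemma~\ref{lem-ER-structure} applies and $S$ is of Type~1 or Type~2 with the stated bound on $|S|$. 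Rescale all lengths by $n$ (replace $[n]^2$ by $[0,1]^2$ and $p_1,p_2,c$ by $p_1/n,p_2/n,c/n$, keeping $m$); since $(n+1)^2=n^2+O(n)$, dividing the two bounds by $n^2$ gives $|S|/n^2\le F_i+o(1)$, where, writing $p_1=(x_1,y_1)$,
\[
F_1=1-\tfrac12 x_1y_1+\frac{(c+m-1)^2}{2m},\qquad F_2=1+\frac{c^2}{8m}+\frac{(1-m-c)^2}{2m},
\]
and the parameters range over the rescaled feasible sets $D_1,D_2$ cut out by the inequalities of Definition~\ref{def1} together with $p_j\in[0,1]^2$, $y_j=mx_j+c$, and (for Type~2) $T_1,T_2\neq\varnothing$. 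Note that $F_2$ depends only on $(m,c)$, and that in $D_1$ the intercept satisfies $c=y_1-mx_1$. It suffices to prove $\sup_{D_1}F_1=\sup_{D_2}F_2=\tfrac35$, with the value attained only in the limit $m\to-1$, $c\to\tfrac85$ (and, in $D_1$, additionally $x_1,y_1\to\tfrac45$).

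For the interior, a Lagrange-multiplier computation — set $\partial_c F_i=\partial_{x_1}F_i=\partial_m F_i=0$ and use $y_j=mx_j+c$ — singles out the unique candidate $(x_1,y_1,m,c)=(\tfrac45,\tfrac45,-1,\tfrac85)$ for $F_1$ and $(m,c)=(-1,\tfrac85)$ for $F_2$, at which $F_i=\tfrac35$; this candidate in fact lies on the facet $m=-y_1/x_1$ of $D_1$. It remains to bound $F_i$ strictly below $\tfrac35$ everywhere else. In the unbounded regime one checks $F_i\to-\infty$ — e.g.\ in $D_1$ with $x_1$ bounded away from $1$ one has $(c+m-1)^2/(2m)\sim\tfrac12 m(1-x_1)^2\to-\infty$ as $m\to-\infty$, while the remaining corner $x_1\to1$ already forces $F_1<\tfrac35$ since there $F_1\approx 1-\tfrac12 x_1y_1\le\tfrac12$ — so the supremum is attained on a compact set. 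On each facet (such as $x_1=y_1$, $y_1=1$, $m=-y_1/x_1$, $c=1$, $c=-m$, and their Type-2 analogues coming from $y_1=\tfrac c2$, $y_2=\tfrac c2$, $m\in\{-y_1/x_1,-y_2/x_2\}$ and the nonemptiness of $T_1,T_2$) one verifies $F_i<\tfrac35$. The structural reason this works is that the feasibility constraints — $m\le-y_1/x_1\le-1$ for Type~1, and $-y_1/x_1\le m\le-y_2/x_2$ together with $y_2\le\tfrac c2\le y_1$ for Type~2 — force the typical line to sit far enough inside the square that the negative correction term(s) in $F_i$ stay large; for instance on $D_1$ the constraint $m\le-y_1/x_1$ reads $c\le-2mx_1$, and substituting this into $F_1$ pushes the facet maximum below $\tfrac35$ (the would-be competing critical points violate $c>1$). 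Carrying this out facet by facet yields the two equalities and the uniqueness of the optimiser.

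To transfer back, note each $F_i$ is continuous and, on the relevant compact set, has the single argmax found above; hence for every $\eta>0$ there is $\delta'(\eta)>0$ such that $F_i>\tfrac35-\delta'(\eta)$ at the parameters forces $|m+1|<\eta$ and $|c-\tfrac85|<\eta$ in the rescaled coordinates. Given $\varepsilon$, set $\eta=\varepsilon/2$, then choose $\delta<\tfrac12\delta'(\eta)$, shrinking it further so that $(\tfrac35-\delta)n^2>0.56\,n^2$; for $n>n_0$ the $o(1)$ error above is at most $\delta$, so $|S|>(\tfrac35-\delta)n^2$ gives $F_i>\tfrac35-2\delta>\tfrac35-\delta'(\eta)$ at the parameters of $S$. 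Thus a typical line of $S$ has slope $m$ with $|m+1|<\varepsilon/2$ and intercept $c$ with $|c/n-\tfrac85|<\varepsilon/2$, so for any $(x,y)$ on that line inside $[0,n]^2$ we have $x+y=(1+m)x+c$ with $|(1+m)x|\le\tfrac\varepsilon2 n$ and $|c-\tfrac85 n|\le\tfrac\varepsilon2 n$, whence $|x+y-\tfrac85 n|<\varepsilon n$; that is, this typical upper boundary line is $\varepsilon$-close to $x+y=\tfrac85 n$, as required.

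The main obstacle is the boundary analysis of the two constrained optimisations: verifying, facet by facet, that $F_i<\tfrac35$ away from the optimiser, while keeping careful track of which parameter tuples are actually feasible. I expect Type~2 to be the more delicate case, since there $F_2$ depends only on $(m,c)$ while the feasibility of a given pair $(m,c)$ hinges on the existence of admissible $p_1,p_2$ with $T_1,T_2$ nonempty, so one must first describe precisely the region of feasible $(m,c)$ before optimising over it.
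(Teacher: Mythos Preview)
Your approach is essentially the same as the paper's: both feed the bounds from Lemma~\ref{lem-ER-structure} into a Lagrange-multiplier optimisation to show that density near $\tfrac35$ forces the typical line near $x+y=\tfrac85 n$. The difference is one of packaging. The paper fixes the intercept via the constraint $c=(\tfrac85+\eta)n$, optimises the remaining variables explicitly, and obtains the closed form
\[
f_\eta=\Big(\tfrac85+\eta-\sqrt{1+2\eta+\tfrac54\eta^2}\Big)n^2+O(n),
\]
which is immediately seen to be at most $(\tfrac35-\varepsilon^2/100)n^2$ for $|\eta|>\varepsilon$; this yields the quantitative choice $\delta=\varepsilon^2/100$ and avoids any separate facet analysis, since the constrained critical point is computed directly. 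Your route instead establishes that the unconstrained supremum equals $\tfrac35$ with a unique (limiting) optimiser and then invokes continuity/compactness; this is perfectly valid but gives an ineffective $\delta$ and shifts the work into the facet-by-facet boundary check you flag at the end. Note also that, as you observe, the optimiser $(m,c)=(-1,\tfrac85)$ lies only on the closure of the feasible sets (Type~1 has $m<-y_1/x_1\le-1$ strictly), so your compactness argument must be phrased on the closure. Both arguments are correct; the paper's explicit computation is shorter and delivers a concrete $\delta$.
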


\begin{proof}
Given $\varepsilon>0$, let $\delta=\frac{\varepsilon^2}{100}$ and $n$ be sufficiently large with respect to $\varepsilon$. Let $S \subseteq [n]^2$ be a sum-free set with $|S| > (\frac{3}{5}-\frac{\varepsilon^2}{100})n^2$.

Suppose for contradiction that any upper boundary line $y=mx+c$ for $S$ is not $\ep$-close to $x+y=\frac{8n}{5}$. That is, either the $y$- or the $x$-intercept is far from where it should be: $$\text{either}\quad |c-8n/5|>\varepsilon n\quad \text{or}\quad |c/m+8n/5|>\varepsilon n.$$
In both cases we shall obtain a contradiction by showing that $|S|\leq (3/5-\varepsilon^2/100)n^2$.

 Considering the typical upper boundary line $y=mx+c$ passing through $p_1$ and $p_2$ given in Definition \ref{def1}, we will finish the case when the $y$-intercept is too far, that is, $|c-8n/5|>\varepsilon n$, whose proof will be divided into two cases depending on the type of $S$. The case when the $x$-intercept is too far (that is, $|c/m+8n/5|>\varepsilon n$) is similar and we omit the details.

Suppose first that $S$ is of Type 1, then by Lemma~\ref{lem-ER-structure}(1), we have
\begin{align*}
|S| \leq (n+1)^2-\frac{1}{2}\left(x_1(m x_1+c)- \frac{(c+mn-n)^2}{m}\right)=:f(x,m,c).
\end{align*}
To simplify the presentation, we introduce a new variable $\eta$ with $\eta\in (-\infty, -\varepsilon)\cup(\varepsilon,+\infty)$ and define $$f_{\eta}:=\max\{f(x,m,c)\mid c-8n/5=\eta n\}.$$
Let $L:=f(x,m,c)-\lambda g$, where $g=c-8n/5-\eta n$. By solving $\frac{\partial L}{\partial x}=0$, $\frac{\partial L}{\partial m}=0$, $\frac{\partial L}{\partial c}=0$ and $\frac{\partial L}{\partial \lambda}=0$, we obtain $m=-\sqrt{1+2\eta+\frac{5\eta^2}{4}}$ and $x=\frac{\frac{4}{5}+\frac{\eta}{2}}{\sqrt{1+2\eta+\frac{5\eta^2}{4}}}n$, and thus the maximum value is $$f_{\eta}=\left(8/5+\eta-\sqrt{1+2\eta+5\eta^2/4}\right)n^2+O(n).$$
As $\eta$ takes values over $(-\infty, -\varepsilon)\cup(\varepsilon,+\infty)$, we get $$f_{\eta}\le\left(8/5+\varepsilon-\sqrt{1+2\varepsilon+5\varepsilon^2/4}\right)n^2+O(n)\le (3/5-\varepsilon^2/100)n^2.$$

For the second case when $S$ is of Type 2, by Lemma~\ref{lem-ER-structure}(2), we have:
$$|S| \leq (n+1)^2 + \frac{c^2}{8m} + \frac{(n-n m-c)^2}{2m}.$$
Using Lagrange multiplier again, we arrive at the same bound
$\left(8/5 + \varepsilon - \sqrt{1 +2 \varepsilon + 5 \varepsilon^2/4}\right)n^2+O(n)\le (3/5-\varepsilon^2/100)n^2$ as desired.
\end{proof}

\subsection{Top right corner}
\begin{lemma}
\label{lem-top-right-point}
For any $\beta > 0$, there exist $\delta> 0$ and $n_0 \in \mathbb{N}$ such that for all $n>n_0$, if $S \subseteq [n]^2$ is sum-free with $|S| > (\frac{3}{5}-\delta)n^2$, then there is a point $(x^*,y^*)\in S$ which is $\beta$-close to the point $(\frac{4n}{5},\frac{4n}{5})$.
\end{lemma}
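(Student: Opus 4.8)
One clean route is to feed the $\varepsilon$-close typical upper boundary line $L:y=mx+c$ supplied by Lemma~\ref{lem-upper-boundary-line} into the two structural alternatives of Definition~\ref{def1}, applying Lemma~\ref{lem-upper-boundary-line} with an $\varepsilon=\varepsilon(\beta)$ to be fixed (small) at the end; note that $(\tfrac35-\delta)n^2>0.56n^2$ for small $\delta$, so Lemma~\ref{lem-ER-structure} applies. The first step is to observe that $\varepsilon$-closeness of $L$ to $x+y=\tfrac{8n}{5}$, together with the defining constraints $c>n$ and $-c\le nm$ of a typical line, already pins $|m+1|=O(\varepsilon)$ and $|c-\tfrac{8n}{5}|=O(\varepsilon n)$: along the chord $L\cap\mathbb{R}^2_{[0,n]}$ one has $x+y=(1+m)x+c$, those constraints force this chord to have $x$-extent $\Omega(n)$, and bounding the variation of $x+y$ on it by $2\varepsilon n$ then forces $m$ near $-1$ and $c$ near $\tfrac{8n}{5}$. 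In particular $p_1\in\partial S\subseteq S$ lies on $L$, so $|x_1+y_1-\tfrac{8n}{5}|=O(\varepsilon n)$.

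If $L$ is typical by virtue of a Type~1 structure, then Definition~\ref{def1}(1) gives $x_1\le y_1$ and $\tfrac{y_1}{x_1}<-m=1+O(\varepsilon)$; with $x_1+y_1=\tfrac{8n}{5}+O(\varepsilon n)$ this squeezes $x_1,y_1=\tfrac{4n}{5}+O(\varepsilon n)$, so $(x^*,y^*)=p_1$ works once $\varepsilon$ is small enough that $O(\varepsilon n)\le\beta n$. The substantive case is when $L$ is typical by virtue of a Type~2 structure, where $p_1,p_2$ may sit anywhere on the chord and Lemma~\ref{lem-ER-structure}(2) controls $|S|$ only through $m,c$. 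Here I would argue by contradiction: assume no point of $S$ lies within $\beta n$ of $(\tfrac{4n}{5},\tfrac{4n}{5})$ and improve the Type~2 bound by an additive $\Omega(\beta^2n^2)$. Let $B$ be the part of the disc of radius $\tfrac{\beta n}{4}$ about $(\tfrac{4n}{5},\tfrac{4n}{5})$ lying strictly below $L$. Since $(\tfrac{4n}{5},\tfrac{4n}{5})$ is within $O(\varepsilon n)$ of $L$, we have $\|B\|=\Theta(\beta^2n^2)$, and for small $\varepsilon,\beta$ the set $B$ is disjoint both from the region $A$ above $L$ and from the triangle $T_0=\{(x,y)\in\mathbb{R}^2_{[0,n]}:2y\le 2mx+c\}$ whose area $\tfrac{c^2}{8|m|}$ is exactly the extra deficit in Lemma~\ref{lem-ER-structure}(2) (points of $B$ have $x+y\approx\tfrac{8n}{5}$ while points of $T_0$ have $x+y\lesssim\tfrac{4n}{5}$).

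I would then reopen the pairing argument behind Lemma~\ref{lem-ER-structure}(2): each $z\in T_0\cap S$ has a partner $\phi(z)\in\{p_1-z,\,p_2-z\}$ that lies below $L$, outside $T_0$, and---since $p_1,p_2\in S$ and $S$ is sum-free---outside $S$; moreover $\phi(z)\notin B$, since $p_i-z\in B$ would force $z$ within $\tfrac{\beta n}{4}$ of $p_i-(\tfrac{4n}{5},\tfrac{4n}{5})$, a point with a negative coordinate whenever $p_i$ (which lies on $L$) is more than $\beta n$ from $(\tfrac{4n}{5},\tfrac{4n}{5})$. As $S$ has no point strictly above $L$, the region below $L$ and outside $T_0$ must then contain the pairwise disjoint sets $S\setminus T_0$, the essentially injective image $\phi(T_0\cap S)$ of size $|T_0\cap S|$, and all of $B$, giving
\[|S|\le (n+1)^2+\frac{c^2}{8m}+\frac{(n-nm-c)^2}{2m}-\|B\|+o(n^2)=\Big(\tfrac{3}{5}+O(\varepsilon)\Big)n^2-\Theta(\beta^2)n^2,\]
the last equality by substituting $m=-1+O(\varepsilon)$ and $c=\tfrac{8n}{5}+O(\varepsilon n)$ into the (tight) Type~2 bound. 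Choosing $\varepsilon$ small in terms of $\beta$ and then $\delta$ small makes the right-hand side below $(\tfrac35-\delta)n^2$, contradicting $|S|>(\tfrac35-\delta)n^2$. The main obstacle is precisely this Type~2 analysis: one must get inside the proof of Lemma~\ref{lem-ER-structure}(2) from~\cite{E-R} to confirm that its pairing leaves $B$ genuinely unaccounted for and that the partners $\phi(z)$ may be taken distinct and disjoint from $B$; by comparison the Type~1 case and the slope/intercept estimates are routine.
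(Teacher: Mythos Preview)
Your Type~1 treatment is correct and essentially the paper's: from $m<-y_1/x_1\le -1$ together with $|m+1|=O(\varepsilon)$ and $|x_1+y_1-\tfrac{8n}{5}|=O(\varepsilon n)$ you correctly squeeze $p_1$ to within $O(\varepsilon n)$ of $(\tfrac{4n}{5},\tfrac{4n}{5})$.

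The Type~2 argument, however, has a genuine gap at precisely the point you flag. Your map $\phi$ is \emph{not} injective. With the natural rule ``$\phi(z)=p_1-z$ if $z_x\le x_1$, else $\phi(z)=p_2-z$'', note that reflection through $p_i/2$ preserves the line $y=mx+c/2$ (since $p_i$ lies on $y=mx+c$); hence both images $p_1-\{z\in T_0:z_x\le x_1\}$ and $p_2-\{z\in T_0:z_x>x_1\}$ sit in the strip $\{w:\,mw_x+c/2\le w_y\}$, the first with $w_y\le y_1$ and $w_x\in[0,x_1]$, the second with $w_y\le y_2$ and $w_x\in[0,x_2-x_1)$. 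These overlap on a region of area $\Theta(n^2)$ for generic positions of $p_1,p_2$ along $L$, so the inequality $|\text{region below }L\setminus T_0|\ge |S\setminus T_0|+|T_0\cap S|+\|B\|$ fails. Worse, the actual proof of Lemma~\ref{lem-ER-structure}(2) in~\cite{E-R} does not proceed via your $\phi$; it uses the pairing rectangle $P_i=[0,x_i]\times[0,y_i]$ together with the corner triangles $T_1,T_2$ below $y=mx+c/2$ (compare Case~3 in Section~\ref{sec-pqSF}). Since $B$ sits near $(\tfrac{4n}{5},\tfrac{4n}{5})$, it will typically lie \emph{inside} $P_i$ and is thus already half-counted there; you cannot simply subtract $\|B\|$ on top of that bound.

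The paper's Type~2 argument is accordingly quite different and more elaborate. Working with $T_1,T_2$ and the triangle $T_\beta=\{x,y\ge \tfrac{4n}{5}-\beta n,\ x+y\le \tfrac{8n}{5}\}$, it first shows (Claim~\ref{cl1}) that $T_1\cap S$ must contain two points $t',t''$ at distance $\ge\beta n$; then (Claims~\ref{cl2}--\ref{cl3}) that each translate $T_2+t'$, $T_2+t''$ nearly covers $T_\beta$, forcing the intercept gap $d=(x'+y')-(x''+y'')$ to be small; and finally (Claim~\ref{cl4}) splits $T_2=T_{2,1}\cup T_{2,2}$ along a line through $(0,y_2)$ so that a triangle $T_{\beta,1}\subset T_\beta$ of area $\ge\tfrac{\beta^2 n^2}{8}$ is disjoint from both $T_{2,1}+t'$ and $T_{2,2}+t''$. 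Applying Lemma~\ref{lem-pairing} to this partitioned $T_2$ then lets one exclude $T_{\beta,1}$ on top of $A$, $\tfrac12 P_2$, and $T_2$, giving the $\Omega(\beta^2)n^2$ improvement you were after---but obtained by engineering the translates, not by inspecting where your $\phi$ lands.
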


\begin{proof}
We first handle Type 1 sum-free sets.
Given $\beta>0$, we have constants $\delta=\delta_{\ref{lem-upper-boundary-line}} > 0$ and $n_0 \in \mathbb{N}$ returned from Lemma ~\ref{lem-upper-boundary-line} with $\varepsilon=\beta/6$. Let $S \subseteq [n]^2$ be a sum-free set of Type~1 with $|S| > (\frac{3}{5}-\delta)n^2$. Then Lemma \ref{lem-upper-boundary-line} gives a typical upper boundary line $y=mx+c$ that is $\varepsilon$-close to $x+y=\frac{8n}{5}$ and let $p_1=(x_1,y_1)$, $p_2=(x_2,y_2)$ be the two points involved. Therefore, $|c-\frac{8n}{5}|<\varepsilon n$, $|x_1+y_1-\frac{8n}{5}|<\varepsilon n$. Consequently, by triangle inequality we have
$$|m+1|=\frac{|x_1+y_1-c|}{x_1}<\frac{2\varepsilon n}{x_1}<4\varepsilon,$$ where the last inequality follows since $x_1>n/2$. Recall that $m\leq -\frac{y_1}{x_1} \leq -1$. Then we have that $|m+\frac{y_1}{x_1}|<4\varepsilon$.

Using these facts we can write $m=-\frac{y_1}{x_1}-\gamma_1$ and $c=(\frac{8}{5}+\gamma_2)n$ for constants $0 \leq \gamma_1 < 4\varepsilon$ and $|\gamma_2|<\varepsilon$. Using the equation $y_1=m x_1+c$, we obtain that $y_1=\frac{4}{5}n+\frac{\gamma_2 n-\gamma_1 x_1}{2}.$ As $x_1\leq n$, by triangle inequality, we have
$$\Big|y_1-\frac{4n}{5}\Big|<\frac{5\varepsilon n}{2}<\beta n.$$
Moreover, since $-\frac{y_1}{x_1}\ge m>-1-4\varepsilon$ and $x_1\le y_1$, we can easily obtain that $|x_1-\frac{4n}{5}|<6\varepsilon n=\beta n$. So $(x_1,y_1)$ is $\beta$-close to the point $(\frac{4n}{5},\frac{4n}{5})$ as desired.

Let us turn to Type 2 sum-free sets. Now, given $\beta>0$, choose positive constants $\varepsilon, \delta$ with $\delta\ll\varepsilon\ll \beta$. Let $S$ be a sum-free set of Type 2 with $|S| > (\frac{3}{5}-\delta)n^2$. Then applying Lemma \ref{lem-upper-boundary-line} with $\sqrt{2}\varepsilon$ playing the role of $\varepsilon$ gives a typical upper boundary line $y=mx+c$ passing through $p_1=(x_1,y_1)$ and $p_2=(x_2,y_2)$ (see Figure~\ref{F2}), which is $\sqrt{2}\varepsilon$-close to $x+y=\frac{8n}{5}$. This implies that the line $y=m x + \frac{c}{2}$ is $\frac{\varepsilon }{\sqrt{2}}$-close to $x+y=\frac{4n}{5}$. We may assume for contradiction that $S$ has no points in the region
$$ T_\beta =\left\{(x,y) \in \mathbb{R}^2_{[0,n]} \mid x,y \geq \frac{4n}{5} - \beta n \:, \: \: y + x \leq \frac{8n}{5} \right\}.$$
Redefine the regions as follows:
$$A = \left\{ (x,y) \in \mathbb{R}^2_{[0,n]}\mid y+x \geq \frac{8n}{5}+\sqrt{2} \varepsilon n \right\},$$
$$T_1=\left \{(x,y) \in \mathbb{R}^2_{[0,n]} \mid y + x \leq \frac{4n}{5} - \frac{\varepsilon n}{\sqrt{2}},\: x \geq x_1 \right \},$$
$$T_2=\left \{(x,y) \in \mathbb{R}^2_{[0,n]} \mid y + x \leq \frac{4n}{5} - \frac{\varepsilon n}{\sqrt{2}},\: y \geq y_2 \right \}.$$
Note that
$$T_1+T_2=\left\{(x,y)\in \mathbb{R}^2_{[0,n]} \mid y+x \leq \frac{8n}{5}-\sqrt{2}\varepsilon n,\: x \geq x_1,\: y \geq y_2\right\}.$$
\begin{figure}
  \centering
  \includegraphics[width=200pt]{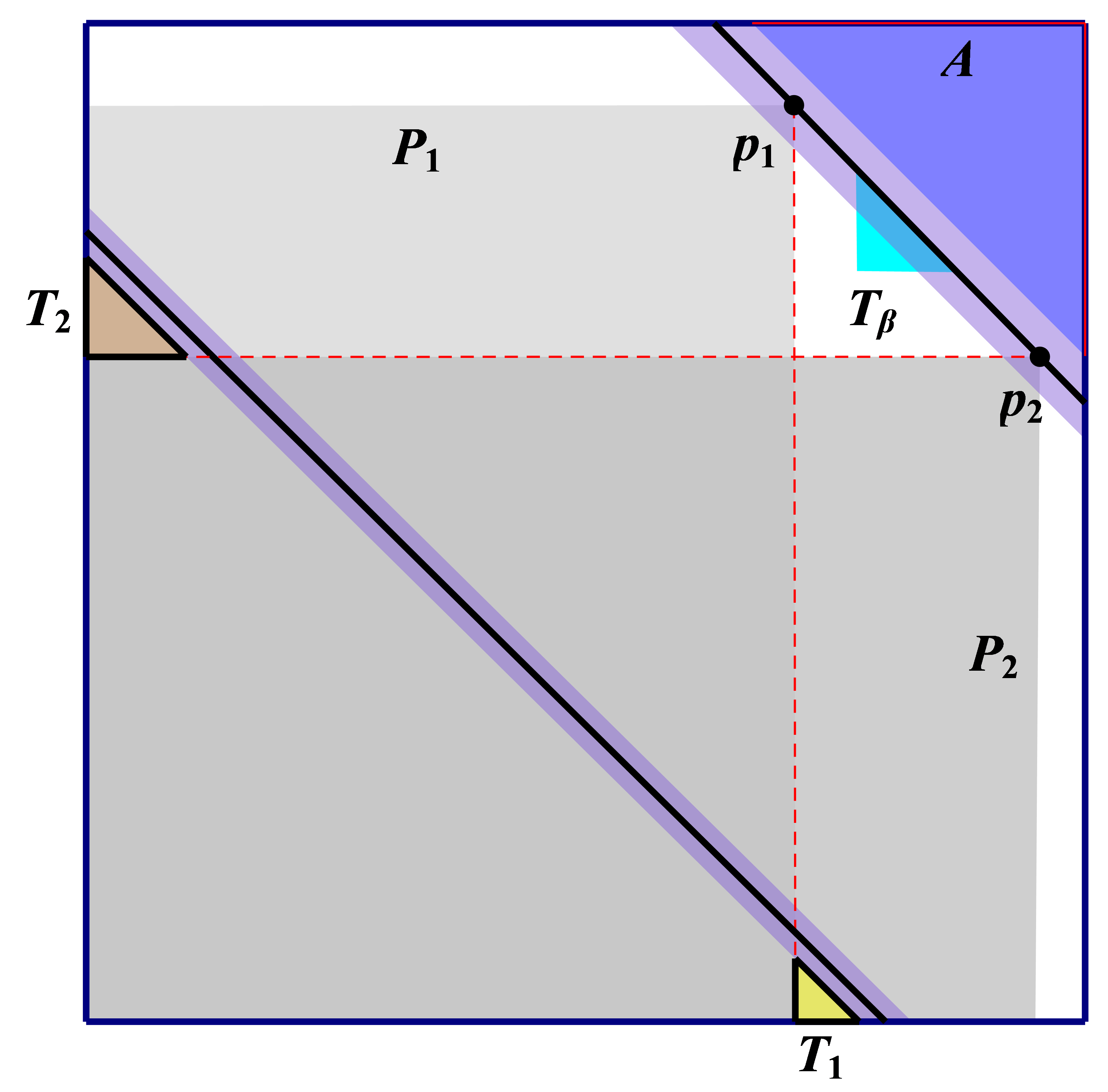}\\
  \caption{$S$ is of Type 2: the two purple stripes are $\{(x,y)\mid |x+y-\frac{8n}{5}|\le\sqrt{2}\varepsilon n\}$ (on the top right) and $\{(x,y)\mid |x+y-\frac{4n}{5}|\le\frac{\varepsilon n}{\sqrt{2}}\}$.}\label{F2}
\end{figure}
We now proceed by considering the areas which may be excluded from $S$. Firstly, we show that $S$ has two points in $T_1$ that are far apart.
\begin{claim}\label{cl1}
  There are two points in $T_1\cap S$ which are at least $\beta n$ far apart.
\end{claim}

\begin{poc}
If this is not true, then there are less than $\pi(\frac{\beta n}{2})^2\le\beta^2 n^2$ points of $S$ in $T_1$, given by the number of points in a square around a circle of diameter $\beta n$ in $T_1$. Since $\|T_{\beta}\|=\frac{1}{2}(2\beta)^2 n^2=2\beta^2 n^2$, we then use the pairing set $P_1$ for $(x_1,y_1)$ and thus
\begin{alignat*}{4}
    |S| &\leq n^2 - \Lambda(A) - \frac{1}{2}\Lambda(P_1) - \Lambda(T_1) +\beta^2 n^2 - \Lambda(T_\beta) \\
    &=n^2 - \|A\| - \frac{1}{2}\|P_1\| - \|T_1\| +\beta^2 n^2- \|T_\beta\| + O(n) \\
    &= n^2 - \frac{1}{2}\left(\frac{2}{5}-\sqrt{2}\varepsilon \right)^2 n^2 - \frac{1}{2} x_1 y_1 - \frac{1}{2}\left(\frac{4}{5}-\frac{\varepsilon}{\sqrt{2}} -\frac{x_1}{n}\right)^2 n^2 +\beta^2 n^2 -2\beta^2 n^2 +O(n).
\end{alignat*}
It is easy to see this is maximized when $y_1$ is minimal and $x_1+y_1=\frac{8n}{5}-\sqrt{2}\varepsilon n$. Then
\begin{align*}
    |S| &\leq \left(\frac{3}{5}-\beta^2+10\varepsilon \right)n^2.
\end{align*}
Therefore, we reach a contradiction by the fact that $\delta\ll\varepsilon\ll \beta$.
\end{poc}

By Claim \ref{cl1}, we let $s$ and $t$ be two points in $T_1$ with distance greater than $\beta n$, and let
$$T_2^s:=s+T_2 \quad \text{and} \quad T_2^t:=t+T_2.$$

\begin{claim}\label{cl2}
  $\Lambda(T_\beta \setminus T_2^s), \Lambda(T_\beta \setminus T_2^t)<\frac{\beta^2}{L} n^2$, where $L=\frac{4}{5\sqrt{3}-6}$.
\end{claim}

\begin{poc}
Suppose to the contrary that either $\Lambda(T_\beta \setminus T_2^s) \geq \frac{\beta^2}{L} n^2$ or $\Lambda(T_\beta \setminus T_2^t) \geq \frac{\beta^2}{L} n^2$, and by symmetry we may assume the first inequality holds. Considering the pairing set $P_2$ for $(x_2,y_2)$ and $T_2$ paired with $T_2^s$, we can obtain from Lemmas \ref{lem-pairing-set} and \ref{lem-pairing} that
\begin{align*}
    |S| &\leq n^2 - \Lambda(A) - \frac{1}{2}\Lambda(P_2) - \Lambda(T_2) - \Lambda(T_\beta \setminus T_2^s) \\
    &\leq  \left(\frac{3}{5}-\frac{\beta^2}{L}+O(\varepsilon) \right)n^2,
\end{align*}
which once again gives a contradiction as $\delta\ll\beta$.
\end{poc}

In the rest of the proof, we shall find a partition $T_2=T_{2,1}\cup T_{2,2}$ into two regions such that their corresponding translates $T^s_{2,1}=s+T_{2,1}$ and $T^t_{2,2}=t+T_{2,2}$ are distantly separated in $T_1+T_2$, which provides a significant portion of points in $T_{\beta}\setminus(T_{2,1}^s\cup T_{2,2}^t)$ that are to be excluded from $S$.

Write $s=(x_s,y_s)$ and $t=(x_t,y_t)$. By Claim \ref{cl2}, we can find that the two points $s+(0,y_2)$ and $t+(0,y_2)$ belong to the region $\{(x,y)\in [n]^2\mid x,y\le \frac{4n}{5}\}$. We may assume $x_s+y_s \geq x_t+y_t$ and let $d:=x_s+y_s-(x_t+y_t)$. It is easy to see in Figure \ref{fig:claim3} that $d$ is the difference between the corresponding $y$-intercepts of the red diagonal and the blue diagonal. By the symmetry of all the shapes involved, we can further assume that $x_s\ge x_t$.

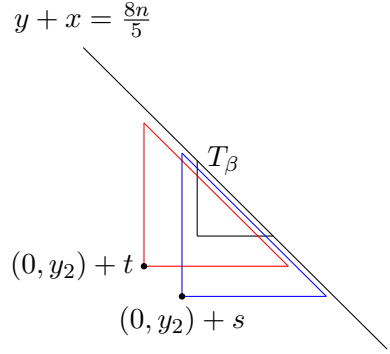
\begin{figure}[h]
    \centering
    \begin{tikzpicture}
        \draw (4,0) -- (0,4) node[anchor=south] {$y+x=\frac{8n}{5}$};
        \draw (1.5,1.5) -- (1.5,2.5) node[anchor=west] {$T_\beta$};
        \draw (1.5,1.5) -- (2.5,1.5);
        \draw[red] (0.8,1.1) -- (0.8,3);
        \draw[red] (0.8,1.1) -- (2.7,1.1);
        \draw[red] (0.8,3) -- (2.7,1.1);
        \filldraw (0.8,1.1) circle (1pt) node[anchor=east] {$(0,y_2)+t$};
        \draw[blue] (1.3,0.7) -- (1.3,2.6);
        \draw[blue] (1.3,0.7) -- (3.2,0.7);
        \draw[blue] (1.3,2.6) -- (3.2,0.7);
        \filldraw (1.3,0.7) circle (1pt) node[anchor=north] {$(0,y_2)+s$};
    \end{tikzpicture}
    \caption{ The red triangle represents $T_2^t$, the blue one represents $T_2^s$ and the black one represents $T_\beta$. }
    \label{fig:claim3}
\end{figure}
\begin{claim}\label{cl3}
  $d \leq \left(2-2\sqrt{\frac{2L-1}{2L}}\right)\beta n = \frac{\sqrt{3}-1}{2}\beta n$.
\end{claim}
\begin{poc}
It is easy to see the region $T_\beta \cap T_2^t$ is a triangle similar to $T_\beta$. 
Note that the area of $T_\beta \setminus T_2^t$ is at least
\[\frac{1}{2}(2\beta n)^2-\frac{1}{2}(2\beta n-d)^2=\Big(2\beta n -\frac{d}{2}\Big)d.\]
By Claim \ref{cl2}, we have that $\left(2\beta n -\frac{d}{2}\right)d \leq \frac{\beta^2 n^2}{L}$, which yields the bound on $d$ as desired.
\end{poc}

Define points
$$X_1=(x_s,y_s+y_2), \quad X_2=(x_t,y_t+y_2) \quad \text{and}\quad X_3=(x_s,y_t+y_2).$$
Let $P_1X_1$ and $P_2X_2$ be line segments which are parallel to $PX_3$ (see Figure \ref{F4}).
Construct a line passing through $(0,y_2)$ which is also parallel to the line segments $PX_3$, where $P=(\frac{4n}{5},\frac{4n}{5})$. Such a line separates $T_2$ into two parts, and we denote by $T_{2,2}$ the part above the line and $T_{2,1}$ for the rest.

\begin{figure}[h]
  \centering
  \includegraphics[width=189pt]{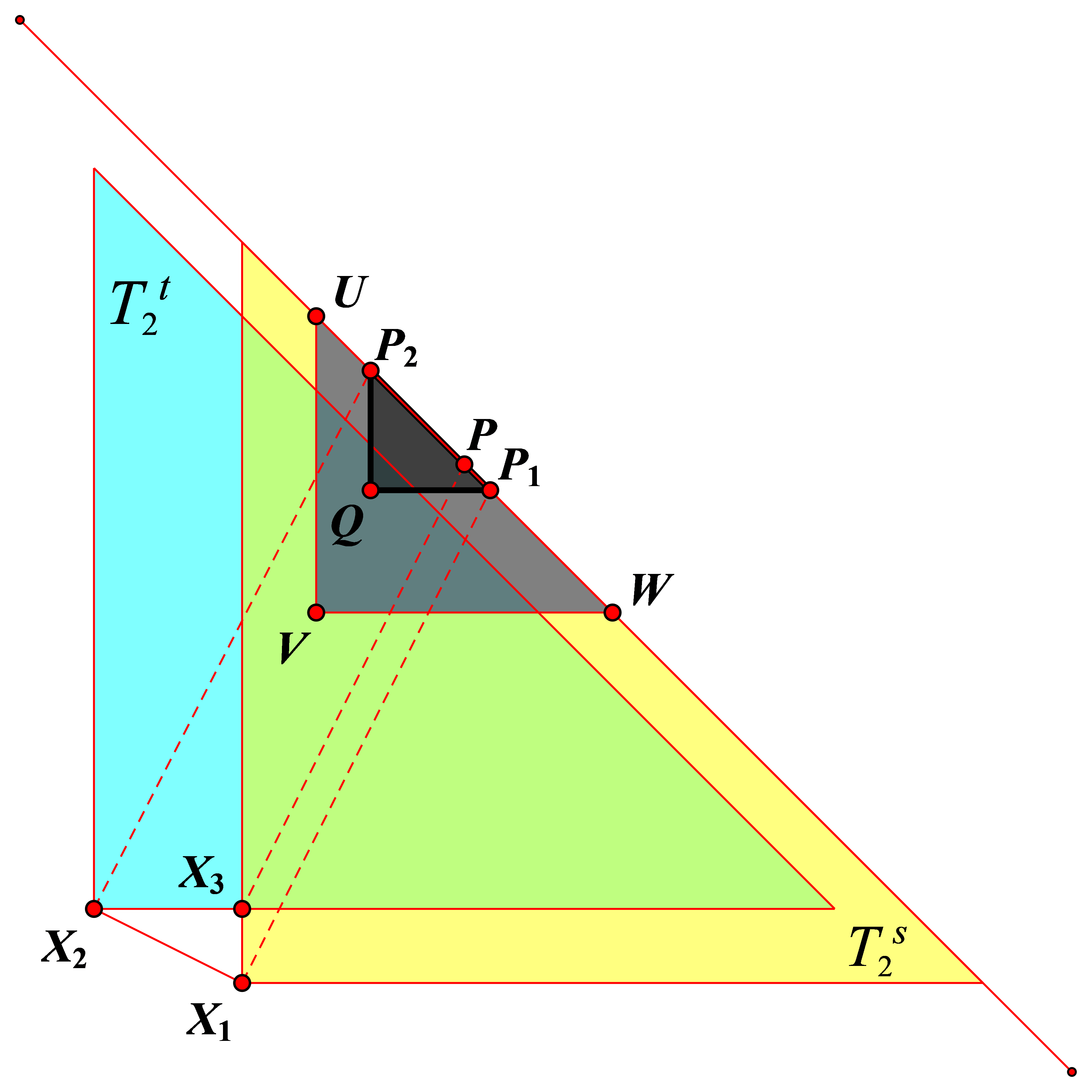}
  \caption{The triangle $UVW$ represents $T_{\beta}$, in which $P=(\frac{4n}{5},\frac{4n}{5})$ is the median point for the line segment $UW$.}\label{F4}
\end{figure}

\begin{claim}\label{cl4}
There exists a triangle $T_{\beta,1} \subseteq T_\beta$ similar to $T_\beta$ such that $T_{\beta,1}$ does not intersect with either of the regions $T_{2,1}+s$ or $T_{2,2}+t$ and $\Lambda(T_{\beta,1})\ge\frac{\beta^2n^2}{8}$.
\end{claim}

\begin{poc}
Let $h:=y_t-y_s$. Since $s$ and $t$ are of distance at least $\beta n$ far apart, that is, $(x_s-x_t)^2+(y_s-y_t)^2=(h+d)^2+h^2\ge\beta^2n^2$, together with Claim~\ref{cl3}, we obtain that either $h\ge\frac{\sqrt{2\beta^2-d^2}}{2}-\frac{d}{2}\ge\frac{\beta n}{2}$ or $h\le-\frac{\sqrt{2\beta^2-d^2}}{2}-\frac{d}{2}<-d$, where the latter contradicts with the assumption that $x_s-x_t=h+d\ge0$. Thus, $h\ge\frac{\beta n}{2}$ and the segment $P_1P_2$ has length at least $\frac{\sqrt{2}}{2}\beta n$. Let $T_{\beta,1}$ be the rectangle triangle $P_1P_2Q$ with diagonal line $P_1P_2$. Then $T_{\beta,1}$ has area at least $\frac{\beta^2n^2}{8}$ and does not intersect either of the regions $T_{2,1}+s$ or $T_{2,2}+t$.
\end{poc}

As aforementioned, now we are ready to finish the proof. Applying Lemma \ref{lem-pairing} to  $T_{2,1}, T_{2,2}$ and their translates $T_{2,1}+s$, $T_{2,2}+t$, we obtain that
\begin{align*}
    |S| &\leq n^2 - \Lambda(A) - \frac{1}{2}\Lambda(P_2) - \Lambda(T_2) - \Lambda(T_{\beta,1})  \\
    &= n^2 - \|A\| - \frac{1}{2}\|P_2\| - \|T_2\| - \|T_{\beta,1}\| +O(n) \\
    &\leq n^2 - \frac{1}{2}\left(\frac{2}{5}-\sqrt{2}\varepsilon \right)^2 n^2 - \frac{1}{2} x_2 y_2
     - \frac{1}{2}\left(\frac{4}{5}-\frac{\varepsilon}{\sqrt{2}}-\frac{x_2}{n}\right)^2 n^2 - \frac{\beta^2 n^2}{8} +O(n).
\end{align*}
The right-hand side above is maximized when $y_2$ is minimal and $x_2+y_2=\frac{8n}{5}-\sqrt{2}\varepsilon n$. Thus,
\begin{align*}
    |S| &\leq \left(\frac{3}{5}-\frac{\beta^2}{8}+O(\varepsilon) \right)n^2,
\end{align*}
a final contradiction.
\end{proof}

\subsection{Putting things together}\label{sec-things-together}
We are now ready to prove our main result, knowing that any almost maximum sum-free set contains an upper boundary line $o(1)$-close to $y+x=\frac{8n}{5}$ and a point $o(1)$-close to $(\frac{4n}{5},\frac{4n}{5})$.

\begin{proof}[Proof of Theorem \ref{mainthm}]
Given $\gamma>0$, choose $\delta\ll \varepsilon\ll\beta\ll\gamma$. Let $S\subseteq[n]^2$ be a sum-free set of size at least $(3/5-\delta)n^2$. Then by Lemma~\ref{lem-upper-boundary-line}, $S$ has a typical upper boundary line $y=mx+c$ which is $\varepsilon$-close to $y+x=\frac{8n}{5}$. Now it suffices to show that $S$ has no point below the line  $x+y=\frac{4n}{5}-\gamma n$ (see the red line in Figure \ref{fig6}).

Note that Lemma \ref{lem-top-right-point} ensures the existence of a point $(x_1,y_1)$ in $S$ that is $\beta$-close to $(\frac{4n}{5},\frac{4n}{5})$. Suppose to the contrary that $p_0=(x_0,y_0)\in S$ is such a point below the line $x+y=\frac{4n}{5}-\gamma n$, and without loss of generality we may assume that $y_0\ge x_0.$

Let
\[A=:\left\{ (x,y) \in [n]^2 \mid y+x>\frac{8n}{5}+\varepsilon n \right\}.\]
Considering the pairing set $P:=\{(x,y) \in [n]^2 \mid x\leq x_1, y \leq y_1 \}$ for $(x_1,y_1)$, there are at most
\begin{align}\label{count1}
n^2-\Lambda(A)-\frac{1}{2}\Lambda(P)\le \left( \frac{3}{5} + \left( \frac{2}{5}-\frac{\varepsilon}{2} \right) \varepsilon + \left( \frac{4}{5}-\frac{\beta}{2} \right) \beta \right) n^2 +O(n)
\end{align}
points which may be included in $S$; and all these points are below the line $x+y=\frac{8n}{5}+\ep n$. Then, writing
\[D_1:= \left\{ (x,y) \in [n]^2 \mid y > \frac{4n}{5}+\beta n,\; y+x<\frac{8n}{5}-\varepsilon n \right\} \] and
\[ D_2:= \left\{ (x,y) \in [n]^2 \mid x > \frac{4n}{5}+\beta n,\; y+x<\frac{8n}{5}-\varepsilon n \right\},\]
it follows from the assumption $|S|\ge(3/5-\delta)n^2$ and (\ref{count1}) that
\begin{align}\label{count2}
\frac{1}{n^2}|(D_1\cup D_2) \setminus S|\leq  \delta +\left( \frac{2}{5}-\frac{\varepsilon}{2} \right) \varepsilon + \left( \frac{4}{5}-\frac{\beta}{2} \right) \beta =:\upsilon(\delta,\varepsilon,\beta).
\end{align}
Note that we can choose $\delta,\varepsilon,\beta$ small enough such that $\upsilon(\delta,\varepsilon,\beta)=o(\gamma^2)$. In the remaining proof, we shall find in $D_1\cup D_2$ (or its translate) a relatively large subset of lattice points which are to be excluded from $S$, yielding a contradiction.
\begin{figure}[h]
    \centering
    \includegraphics[width = 90mm]{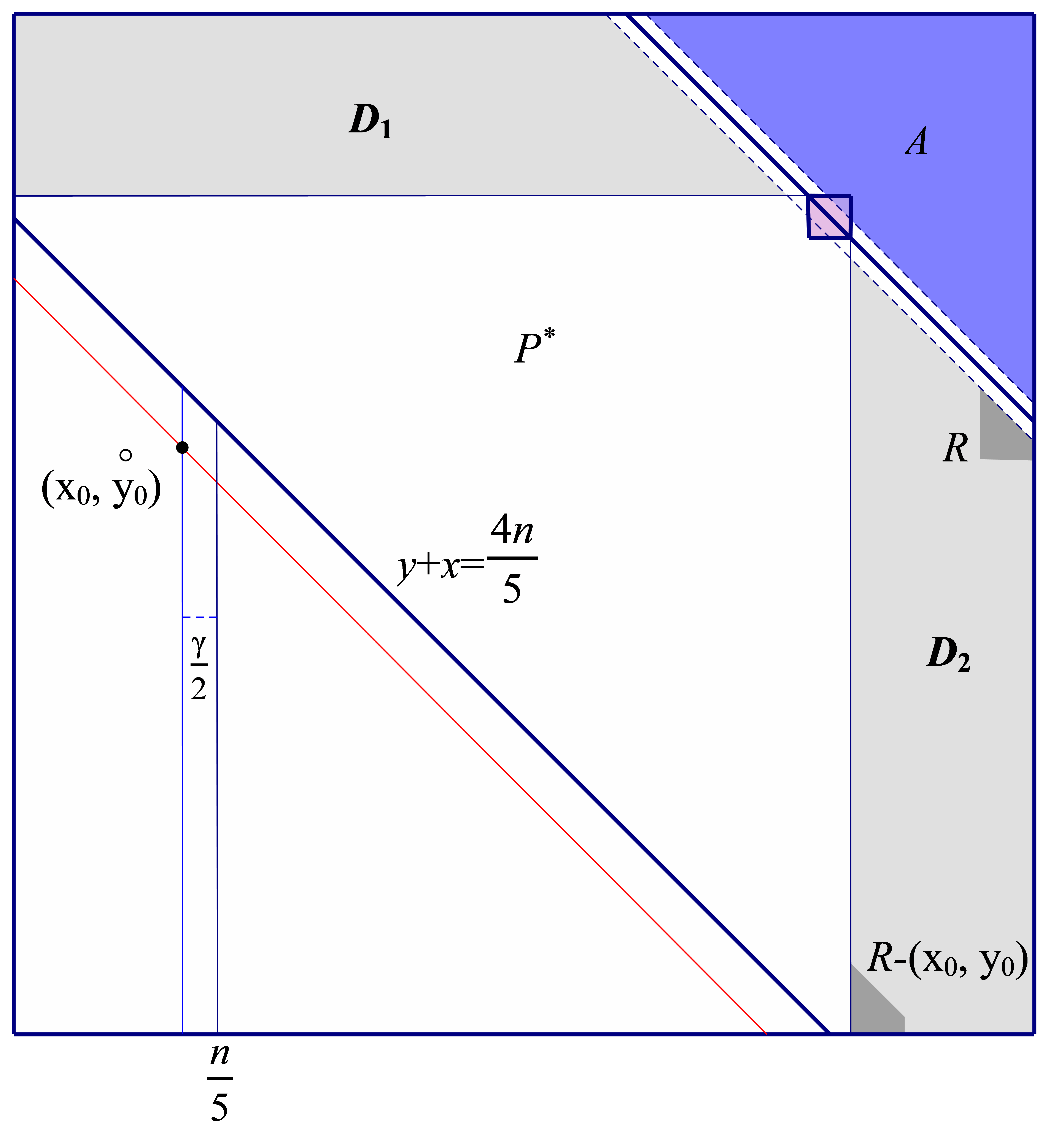}
    \caption{The case when $x_0< \frac{n}{5} - \frac{\gamma}{2} n$: The two grey regions $R:=(D_2 +p_0)\cap D_2$ and its translate $R-p_0$ form a pairing, which excludes from the sum-free set $S$ the amount of points which fit in one of the regions.}
    \label{fig6}
\end{figure}

First assume that $p_0$ is such that $x_0< \frac{n}{5} - \frac{\gamma}{2} n.$ Then the region $D_2 +p_0 $ intersects $D_2$ on a set of lattice points, denoted by $R$.
Since $R, R-p_0\subseteq D_1 \cup D_2$, applying Lemma \ref{lem-pairing} with $a=p_0$ and $T=R-p_0$ gives that $|(R\cup(R-p_0))\cap S|\le |R|$, and thus \[|(D_1\cup D_2) \setminus S|\ge|(R\cup(R-p_0))\setminus S|\ge |R\cup(R-p_0)|-|R|.\]
It is easy to observe that $|R\cup(R-p_0)|-|R|$ is minimized when $p_0$ is close to the point $(\frac{n}{5}-\frac{\gamma n}{2},\frac{3n}{5}-\frac{\gamma n}{2}),$ yielding an area of size at least $\left( \frac{3}{8}\gamma^2 + \frac{\gamma-2\beta}{4}(\beta-2\varepsilon) \right) n^2+O(n)$ (See Figure \ref{fig6}).
Thus $|(D_1\cup D_2) \setminus S|\ge \left( \frac{3}{8}\gamma^2 + \frac{\gamma-2\beta}{4}(\beta-2\varepsilon) \right) n^2+O(n)>\upsilon(\delta,\varepsilon,\beta)n^2$, a contradiction to (\ref{count2}).

Now it remains to consider the case when $p_0$ satisfies $x_0\geq \frac{n}{5} - \frac{\gamma n}{2}.$ We consider the overlap of $(D_1 \cup D_2) - p_0$ with $(x_1,y_1)-((D_1 \cup D_2)-p_0)$ and denote by $\mathcal{O}$ the set of lattice points in the overlap (see Figure \ref{overlaps-1}). Let
$$D:=((D_1 \cup D_2)-p_0)\setminus (D_1\cup D_2).$$
Then it is easy to verify that $\mathcal{O}\subseteq D$. Note that by Lemma \ref{lem-pairing} with $a=p_0$ and $T=D_1\cup D_2$, one has that
$$|(D\cup D_1 \cup D_2)\cap S|\le |D_1\cup D_2|.$$

\begin{figure}[h]
    \centering
    \includegraphics[width = 110mm]{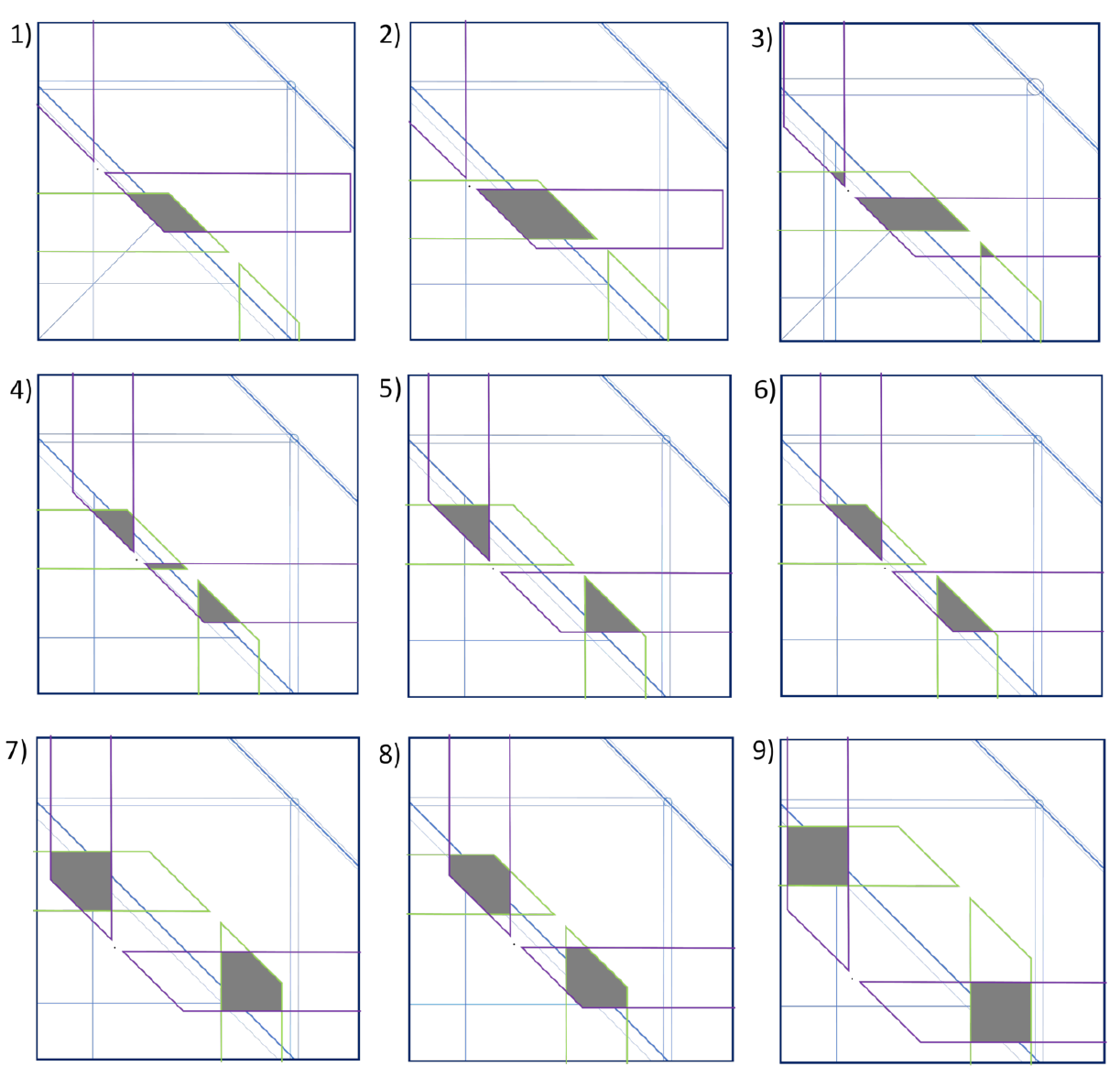}
    \caption{All possible shapes of $\mathcal{O}$: the green lines frame the region $(D_1 \cup D_2) - p_0$, whilst the purple lines frame $(x_1,y_1)-((D_1 \cup D_2)-p_0)$. The trivial cases where the overlap is cut off by the $x$- and $y$-axes are not shown.}
    \label{overlaps-1}
\end{figure}
Then, using~\eqref{count2}, we have
\begin{align*}
|\mathcal{O} \cap S|\le|D\cap S|&=|(D\cup D_1 \cup D_2)\cap S|-|(D_1 \cup D_2)\cap S|\\
&\le|(D_1\cup D_2) \setminus S| \leq \upsilon(\delta,\varepsilon,\beta)n^2.
\end{align*}
Moreover, by definition we know that $(x_1,y_1)-\mathcal{O}\subseteq \mathcal{O}$, that is,  $\mathcal{O}$ (and also $P\setminus\mathcal{O}$) is a pairing set for $(x_1,y_1)$. It follows from Lemma \ref{lem-pairing-set} that

\begin{align*}
    |S| &\leq n^2 - \Lambda(A) - \frac{1}{2}\Lambda(P\setminus\mathcal{O})-(|\mathcal{O}|-|\mathcal{O}\cap S|) \\
    &\le n^2 - \|A\|-\frac{1}{2} \|P\| - \frac{1}{2} |\mathcal{O}|+\upsilon(\delta,\varepsilon,\beta)n^2 \\
    &\le n^2 - \frac{1}{2}\Big(\frac{2n}{5}-\varepsilon n\Big)^2-\frac{1}{2}\Big(\frac{4n}{5}-\beta n\Big)^2- \frac{1}{2} |\mathcal{O}|+\upsilon(\delta,\varepsilon,\beta)n^2 \\
    &=\frac{3}{5}n^2+o(\gamma^2) n^2-\frac{1}{2}|\mathcal{O}|.
\end{align*}

Therefore, it suffices to show that $|\mathcal{O}|= \Omega(\gamma^2) n^2$, and in the remaining proof we shall verify this by considering all possible shapes of $\mathcal{O}$.
\begin{figure}[h]
    \centering
    \includegraphics[width=112mm]{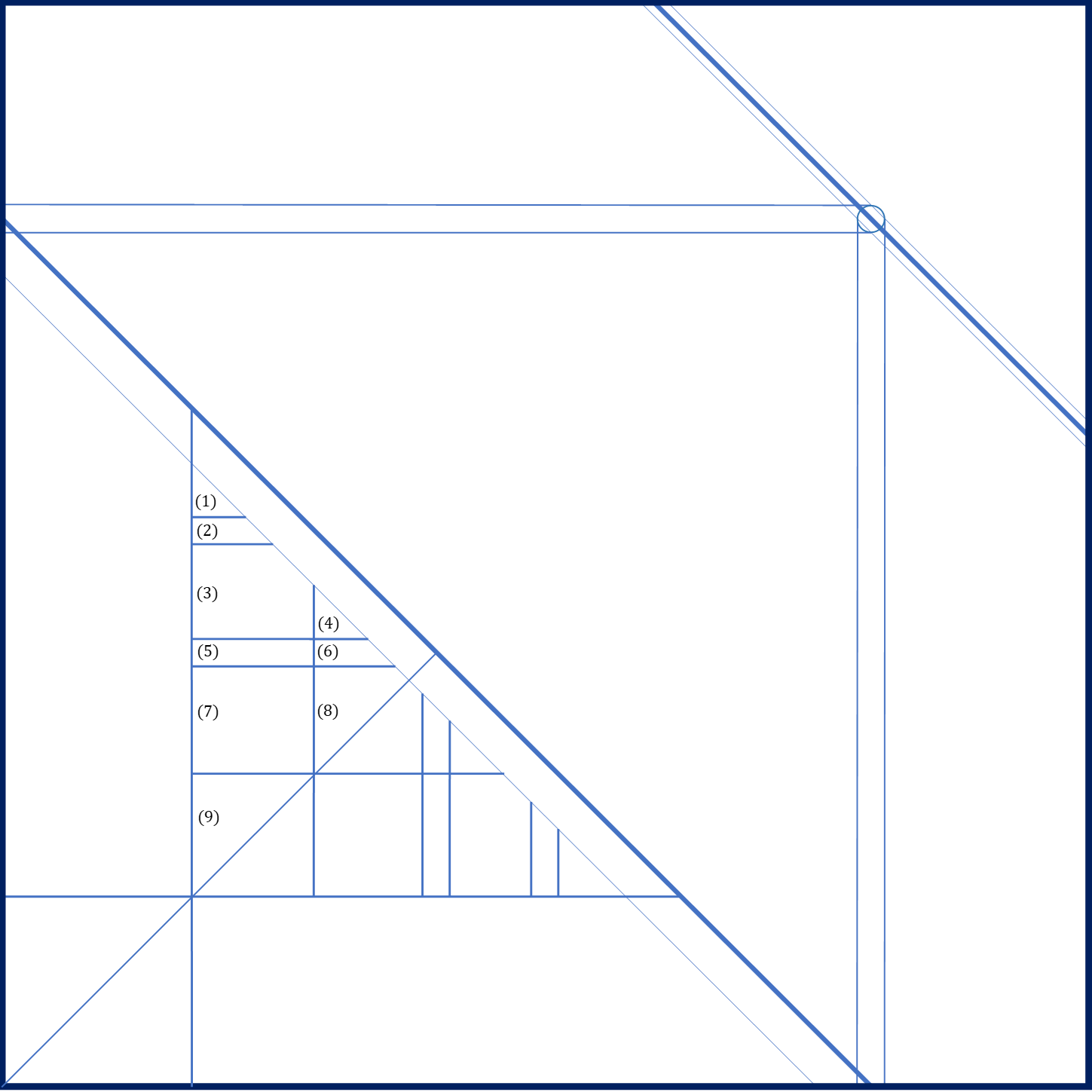}
    \caption{each numbered region will produce a unique shape of the overlap.}
    \label{F7}
\end{figure}

Since $(x_1,y_1)$ is $\beta$-close to $(\frac{4n}{5},\frac{4n}{5})$ and $\beta\ll\gamma$, we may further assume that $(x_1,y_1)=(\frac{4n}{5},\frac{4n}{5})$ in order not to cluster the presentation. We list in Figure \ref{overlaps-1} all possible shapes of the overlap $\mathcal{O}$, which originate from the location of the point $(x_0, y_0)$ (see Figure \ref{F7}). In particular, the area of the overlap in each of these cases is given as follows:
\begin{enumerate}
    \item[(1)] $4\left(\frac{3}{5}n-y_0\right) \left(\frac{4}{5}n- y_0- x_0- \varepsilon n \right), $ where $y_0\ge\frac{n}{2}+\frac{\beta}{2} n$, $x_0\ge\frac{1}{5}n-\frac{\gamma }{2}n$.
    \item[(2)] $4\left(y_0-\frac{2}{5}n-\beta n\right) \left(\frac{4}{5}n-y_0-x_0-\varepsilon n \right)$, where $y_0\in[\frac{1}{2}n-\frac{\beta+\varepsilon}{2} n, \frac{1}{2}n+\frac{\beta}{2} n]$, $x_0\ge\frac{1}{5}n-\frac{\gamma }{2}n$.
    \item[(3)] $\left(n-2y_0-\beta n -\varepsilon n \right)^2 + 4\left(y_0-\frac{2}{5}n-\beta n\right) \left(\frac{4}{5}n-y_0- x_0- \varepsilon n \right)$, where \\\[y_0\in\left[\frac{2}{5}n+\beta n,\frac{1}{2}n-\frac{\beta+\varepsilon}{2}n\right], x_0\in\left[
        \frac{1}{5}n-\frac{\gamma }{2}n,\frac{3}{10}n+\frac{\beta-\varepsilon}{2}n\right].\]
    \item[(4)] $4\left(\frac{4}{5}n-y_0- x_0- \varepsilon n \right) \left(x_0-\frac{1}{5}n-2\beta n\right)$, where $y_0\ge \frac{2}{5}n+\beta n,  x_0\ge\frac{3}{10}n+\frac{\beta-\varepsilon}{2} n$.
    \item[(5)] $\left( n-2y_0-\beta n-\varepsilon n \right)^2$, where \[y_0\in\left[\frac{2}{5}n-\frac{\varepsilon }{2}n, \frac{2}{5}n+\beta n\right], x_0\in\left[
        \frac{1}{5}n-\frac{\gamma }{2}n, \frac{3}{10}n+\frac{\beta-\varepsilon}{2}n\right].\]
    \item[(6)] $4 \left(\frac{4}{5}n-y_0- x_0- \varepsilon n \right) \left(\frac{1}{5}n+x_0-y_0-\beta n \right)$, where \[y_0\in \left[\frac{2}{5}n-\frac{\varepsilon }{2}n, \frac{2}{5}n+\beta n\right], x_0\ge\frac{3}{10}n+\frac{\beta-\varepsilon}{2} n.\]
    \item[(7)] $2 \left( \frac{1}{5}n-\beta n\right)^2 - \left(2y_0- \frac{3}{5}n+\varepsilon n - \beta n\right)^2$, where \[y_0\in\left[\frac{3}{10}n+\frac{\beta-\varepsilon}{2} n, \frac{2}{5}n-\frac{\varepsilon}{2} n\right], x_0\in\left[
        \frac{1}{5}n-\frac{\gamma }{2}n, \frac{3}{10}n+\frac{\beta-\varepsilon}{2}n\right].\]
    \item[(8)] $2 \left( \frac{1}{5}n-\beta n\right)^2 - \left(2y_0- \frac{3}{5}n+\varepsilon n - \beta n\right)^2-\left(2x_0- \frac{3}{5}n+\varepsilon n - \beta n\right)^2$, where \\ $$\frac{3}{10}n+\frac{\beta-\varepsilon}{2} n\le x_0\le y_0\le \frac{2}{5}n-\frac{\varepsilon }{2}n.$$
    \item[(9)] $2 \left( \frac{1}{5}n-\beta n\right)^2$, where $\frac{1}{5}n-\frac{\gamma }{2}n\le x_0\le y_0\le \frac{3}{10}n +\frac{\beta-\varepsilon}{2}n.$
\end{enumerate}
It is obvious that for the regions $5,$ $7$ and $9,$ the area of the overlap has size $\Omega(\gamma^2) n^2$. The only regions which interest us are the ones bordering the line $y+x=\frac{4n}{5}-\gamma n$. Moreover,
the regions in question are $1,$ $2,$ $3,$ $4,$ $6$ and $8.$ Among them, the minimum overlap is achieved in region $1$ by letting $(x_0,y_0)=(\frac{n}{5}-\frac{\gamma n}{2},\frac{3n}{5}-\frac{\gamma n}{2})$, which yields a value of $|\mathcal{O}|\ge 2\gamma(\gamma-\varepsilon)n^2$ as desired.
This completes the proof of Theorem \ref{mainthm}.
\end{proof}

\section{Proof of Theorem \ref{thm2}}\label{sec-pqSF}
In this section we investigate the maximum size of a $(p,p)$-sum-free set $S$. To simplify the presentation, we write $p$-sum-free for $(p,p)$-sum-free. Our proof builds on the techniques developed in the work of Elsholtz and Rackham \cite{E-R}.
We need a variant notion of pairing set as follows.
\begin{defn}
	For any $(a_1,a_2)\in\mathbb{R}^2_{[0,n]}$, $P\subseteq\mathbb{R}^2_{[0,n]}$ is a $p$-pairing set for $(a_1,a_2)$ if, for any $(x_1,x_2)\in P$, we have $(\frac{a_1}{p}-x_1,\frac{a_2}{p}-x_2)\in P$.
\end{defn}
Similar to Lemmas \ref{lem-pairing-set} and \ref{lem-pairing}, the following lemma guarantees that for any point $a\in S$ and its $p$-pairing set $P$, at least half of the points in $P$ are excluded from $S$. Similar statement also holds when we consider a set and its translate dilated by $p$. We omit the proof.

\begin{lemma}\label{lem1}
	Let $S\subseteq[n]^2$ be a $p$-sum-free set.
	\begin{itemize}
		\item[\emph{(1)}] If $P$ is a $p$-pairing set for some $a\in S$, then we have $|S\cap P|\leq\frac{1}{2}\Lambda(P)$.
		\item[\emph{(2)}] If $T\subseteq\mathbb{R}^2_{[0,n]}$ and $a\in S$, then $|S\cap (p(a+T)\cup T)|\leq\Lambda(T)$.
	\end{itemize}
\end{lemma}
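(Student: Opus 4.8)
The plan is to adapt, almost verbatim, the proofs of Lemmas~\ref{lem-pairing-set} and~\ref{lem-pairing}, replacing the relation $x+y=z$ by the relation $px+py=z$ that defines $p$-sum freeness; the only genuinely new wrinkle is keeping track of integrality after dividing or multiplying by $p$.

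For part~(1), I would fix $a=(a_1,a_2)\in S$ and a $p$-pairing set $P$ for $a$, and introduce the map $\iota\colon (x_1,x_2)\mapsto(\tfrac{a_1}{p}-x_1,\tfrac{a_2}{p}-x_2)$. By the definition of a $p$-pairing set, $\iota$ carries $P$ into $P$, and since $\iota\circ\iota=\mathrm{id}$ it is an involution of $P$ whose only fixed point is $\tfrac{a}{2p}$. The identity driving everything is $p\,x+p\,\iota(x)=a$: thus if both $x$ and $\iota(x)$ belonged to $S$, then with $a\in S$ we would have a solution of $px+py=z$ inside $S$, a contradiction; so each two-element orbit of $\iota$ meets $S$ in at most one point. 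The fixed point $\tfrac{a}{2p}$ is likewise not in $S$, since $2p\cdot\tfrac{a}{2p}=a$ is itself a forbidden solution (take $x=y=\tfrac{a}{2p}$). Adding up over the $\iota$-orbits on the lattice points of $P$ then gives $|S\cap P|\le\tfrac12\Lambda(P)$.

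For part~(2) I would run the translate version of the same idea: given $a\in S$ and $T\subseteq\mathbb{R}^2_{[0,n]}$, pair each $t\in T$ with $p(a+t)\in p(a+T)$. Since $p\,a+p\,t=p(a+t)$ and $a\in S$, $p$-sum freeness shows that $t$ and $p(a+t)$ cannot both lie in $S$; as $t$ ranges over the $\Lambda(T)$ lattice points of $T$, these pairs cover $T$ together with its image in $p(a+T)$, each contributing at most one point of $S$, whence $|S\cap(p(a+T)\cup T)|\le\Lambda(T)$.

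The step I expect to need the most care is exactly the integrality bookkeeping that is invisible when $p=1$: the maps $x\mapsto\tfrac{a}{p}-x$ and $t\mapsto p(a+t)$ send the integer lattice to itself only when $p\mid a_1$ and $p\mid a_2$ (and in~(2) the image then sits on the coarser lattice $p\mathbb{Z}^2$, so the pairs only account for those lattice points of $p(a+T)$ of that form). In the intended applications one arranges $p\mid a_1,a_2$ by moving $a$ by $O(1)$, or absorbs the small deficit into the $O(n)$ error term of Theorem~\ref{thm2}; granting this, the counting is identical to the $p=1$ case, which is why the proof is omitted in the main text.
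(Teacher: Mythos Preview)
Your proposal is correct and follows exactly the approach the paper indicates: the paper omits the proof entirely, noting only that it is ``analogous to Lemmas~\ref{lem-pairing-set} and~\ref{lem-pairing},'' which is precisely the adaptation you carry out via the involution $x\mapsto a/p-x$ and the pairing $t\leftrightarrow p(a+t)$. Your flag on the integrality issue is well placed---as literally stated the lemma can fail when $p\nmid a_i$ or when $T$ is a real region (since $p(a+T)$ then contains lattice points not of the form $p(a+t)$ for lattice $t$), but in every application in Section~\ref{sec-pqSF} this discrepancy is absorbed into the ambient $O(n)$ error term of Theorem~\ref{thm2}, exactly as you note.
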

\medskip
\begin{proof}[{ Proof of Theorem \ref{thm2}}]
	
	Let $S\subseteq [n]^2$ be a $p$-sum-free set. Our goal is to show that $|S|\leq\left(1-\frac{2}{4p^2+1}\right)n^2+O(n)$ for $p\ge 2$. We may neglect any boundary effects as they give error terms $O(n)$ for the size of $S$, which will be omitted so as to ease the presentation. We consider cases depending on the placement of upper boundary lines.
	
	\medskip
	\noindent \textbf{Case 1:} $|\partial S|\le1$. As vertices in the upper boundary come in (adjoint) pairs, we see that in this case $\partial S=\emptyset $, and thus Lemma \ref{onepoint} ensures the existence a point $p_1=(x_1,y_1)\in S$ such that $x_1\geq x$ and $y_1 \geq y$ for all $(x,y) \in S.$ Let $P:=\{(x,y)\mid   0\leq x\leq\frac{x_1}{p},0\leq y\leq\frac{y_1}{p}\}$. Then $P$ is a $p$-pairing set for $p_1$ and thus by Lemma \ref{lem1}, we have that \begin{align}
	|S| \nonumber
	&\leq(n+1)^2-(n-x_1)n-(n-y_1)x_1-\frac{1}{2}\Lambda(P)\\ \nonumber
	&=\left(1-\frac{1}{2p^2}\right)x_1y_1+O(n)\leq\left(1-\frac{1}{2p^2}\right)n^2+O(n)< \left(1-\frac{2}{4p^2+1}\right)n^2.\nonumber
	\end{align}
	\textbf{Case 2:} $|\partial S|\geq2$ and for every two points $p_1=(x_1,y_1),p_{2}=(x_2,y_2)$ that are adjoint in $\partial S$ with $x_1<x_2$ and $y_1>y_2$, we have either $m(p_1,p_2)>-\frac{y_{2}}{x_{2}}$ or $m(p_1,p_2)<-\frac{y_1}{x_1}$. \medskip
	
	In this case, we choose $p_1=(x_1, y_1)\in \partial S$ such that $x_1y_1\geq xy$ holds for every $(x, y)\in \partial S$ and $P_1:=\{(x,y)\mid   0\leq x\leq\frac{x_1}{p},0\leq y\leq\frac{y_1}{p}\}$. By symmetry, we may further assume that $y_1\ge x_1$. If there does not exist $p_2=(x_2, y_2)\in \partial S$ adjoint to $p_1$ with $x_2 > x_1$ and
	$y_2<y_1$, then by Lemma \ref{lem1} and that $y_1\ge x_1$, we have
	\begin{align*}
	|S| \leq n^2-(n-x_1)n-\frac{1}{2}\Lambda(P_1)\leq nx_1-\frac{x_1^2}{2p^2} \leq\left(1-\frac{1}{2p^2}\right)n^2.
	\end{align*}
	
	Thus, we may assume that there exists $p_2=(x_2, y_2)\in \partial S$ adjoint to $p_1$ with $x_2 > x_1$ and
	$y_2<y_1$. Let $L: y=mx+c$ be the line passing through $p_1,p_2$ and define $$A=\{(x,y)\in\mathbb{R}^2_{[0,n]}\mid  y>mx+c\}.$$
	We claim that $m<-\frac{y_1}{x_1}\leq-1$. Indeed, by the assumption of Case 2, assume for contradiction that $m>-\frac{y_2}{x_2}$, then
	\begin{equation*}
	x_2y_2
	=x_2(y_1+m(x_2-x_1))\geq x_2y_1-y_2(x_2-x_1)=x_1y_1+(y_1-y_2)(x_2-x_1)>x_1y_1,
	\end{equation*}
	contrary to the choice of $p_1$.
	
	We split into two subcases depending on the $x$- and $y$-intercept of $L$. Note first that, if $c\leq n$, then we have $-\frac{c}{m}\leq n$ because $m\le -1$, and so $|S|<\frac{1}{2}n^2$ as $A\cap S=\emptyset$.
	
	\textbf{(I).} If $c>n$ and $-\frac{c}{m}\leq n$, then
	\[|S| \leq n^2-\Lambda(A)-\frac{1}{2}\Lambda(P_1)=\frac{n}{m}\left(\frac{n}{2}-c\right)-\frac{1}{2p^2}x_1y_1=\frac{n}{m}\left(\frac{n}{2}-y_1\right)+x_1n-\frac{1}{2p^2}x_1y_1.\]
	Now if $y_1\leq\frac{n}{2}$, then as $m<-1$ and $x_1\le y_1\le \frac{n}{2}$, we observe that $|S|\leq x_1n\leq\frac{1}{2}n^2$. We may then assume $y_1>\frac{n}{2}$.
	
	If $x_1<\frac{n}{2}$, then by the assumption that $m<-\frac{y_1}{x_1}$, we have
	\begin{equation*}
	|S|
	\leq\frac{nx_1}{y_1}\left(y_1-\frac{n}{2}\right)+x_1n-\frac{1}{2p^2}x_1y_1
	\leq \left(2n-\frac{n^2}{2y_1}-\frac{y_1}{2p^2}\right)\frac{n}{2}\le\left(1-\frac{1}{2p}\right)n^2.
	\end{equation*}
	where the last inequality follows from $\frac{n^2}{2y_1}+\frac{y_1}{2p^2}\ge 2\sqrt{\frac{n^2}{2y_1}\frac{y_1}{2p^2}}=\frac{n}{p}$.
	
	Assume then $x_1\ge\frac{n}{2}$. Note that as $-\frac{c}{m}\le n$, the slope of $L$ is smaller than the slope of the line passing through $p_1$ and $(n,0)$, and so $m\leq\frac{-y_1}{n-x_1}$. Thus, we have
	\begin{align}
	|S| \nonumber
	&\leq\frac{n(n-x_1)}{y_1}\left(y_1-\frac{n}{2}\right)+x_1n-\frac{1}{2p^2}x_1y_1
	\leq n^2-\left(\frac{n-x_1}{2n}n^2+\frac{x_1y_1}{2p^2}\right)\\ \nonumber
	&=\frac{n^2}{2}+\left(\frac{n}{2}-\frac{y_1}{2p^2}\right)x_1 \le\frac{n^2}{2}+\left(\frac{n}{2}-\frac{y_1}{2p^2}\right)y_1 \le\left(1-\frac{1}{2p^2}\right)n^2, \nonumber
	\end{align}
	where the second last inequality follows since $x_1\le y_1$ and the last one follows from $p\ge 2$.
	
	\textbf{(II).} If $c>n$ and $-\frac{c}{m}>n$, then $A$ is a triangle and thus
	\begin{align}
	|S| \nonumber
	&\leq n^2-\Lambda(A)-\frac{1}{2}\Lambda(P_1)\\ \nonumber
	&=n^2+\frac{(n-y_1)^2}{2m}+\frac{m(n-x_1)^2}{2}-(n-x_1)(n-y_1)-\frac{x_1y_1}{2p^2}. \nonumber
	\end{align}
	The right-hand side above is increasing when $m\le -\frac{n-y_1}{n-x_1}$.
	Since $\frac{n-y_1}{n-x_1}\leq\frac{y_1}{x_1}\le-m$, it follows that
	\begin{align}
	|S| \nonumber
	&\leq n^2-\frac{(n-y_1)^2}{\frac{2y_1}{x_1}}-\frac{y_1(n-x_1)^2}{2x_1}-(n-x_1)(n-y_1)-\frac{x_1y_1}{2p^2}\\ \nonumber
	&\leq2(x_1+y_1)n-n^2-\left(2+\frac{1}{2p^2}\right)x_1y_1,  \nonumber
	\end{align}
	where the right-hand side of the last inequality is maximized when $x_1=y_1=\frac{4p^2}{4p^2+1}n$, and thus $|S|\leq\left(1-\frac{2}{4p^2+1}\right)n^2$.\medskip

	\noindent \textbf{Case 3:} There exist $p_1=(x_1,y_1),p_2=(x_2,y_2)$ adjoint in $\partial S$ such that $x_1<x_2$, $y_1>y_2$ and $-\frac{y_{1}}{x_{1}}\leq m(p_1,p_2)\leq-\frac{y_{2}}{x_{2}}$. \medskip
	
	For each $p_i$ with $i\in[2]$, define $P_i:=\{(x,y)\mid   0\leq x\leq\frac{x_i}{p},0\leq y\leq\frac{y_i}{p}\}$ and set $A=\{(x,y)\in\mathbb{R}^2_{[0,n]}\mid  y>mx+c\}$ (see Figure~\ref{lastF}). Since $m\leq-\frac{y_{2}}{x_{2}}$ and $y_2=mx_2+c$, we have that $y_2\leq\frac{c}{2}$. Similarly, by the condition $m\ge-\frac{y_{1}}{x_{1}}$, we have that $y_1\ge\frac{c}{2}$.
	
	Define $$T_1=\{(x,y)\in\mathbb{R}^2_{[0,n]}\mid   x\geq \frac{x_1}{p}, y\leq mx+\frac{c}{2p}\},$$
	and
	$$T_2=\{(x,y)\in\mathbb{R}^2_{[0,n]}\mid   y\geq \frac{y_2}{p}, y\leq mx+\frac{c}{2p}\}.$$ We claim that $T_1,T_2\neq\emptyset $. These amount to proving $-\frac{c}{2mp}\geq \frac{x_1}{p}$ and $\frac{c}{2p}\geq\frac{y_2}{p}$, which in turn follows from the fact that $y_2\leq\frac{c}{2}\le y_1$.
	\begin{figure}[h]
		\centering
		\includegraphics[width=250pt]{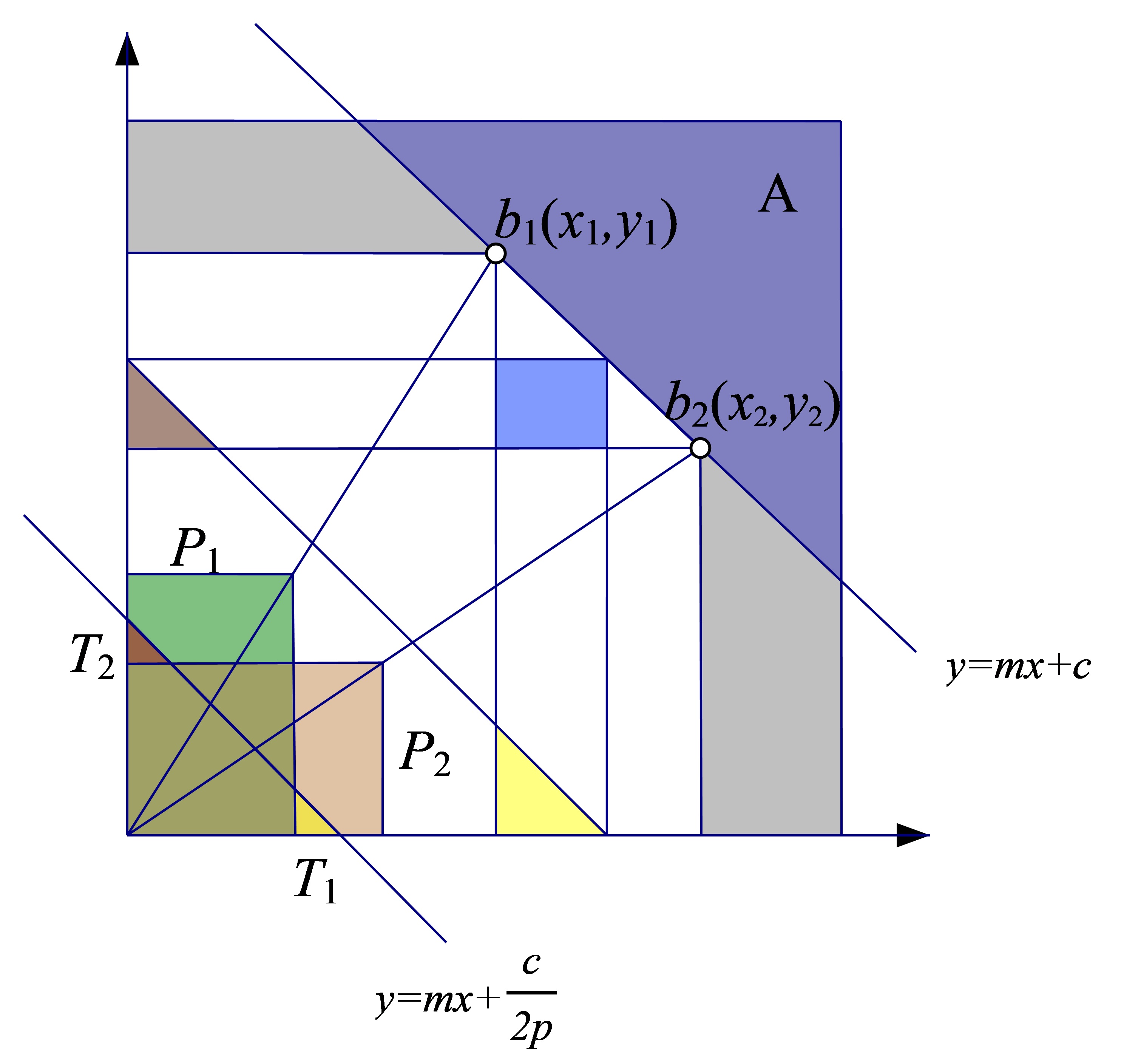}
		\caption{$T_1,T_2\neq\emptyset $.}
		\label{lastF}
	\end{figure}

	If $T_1\cap S=\emptyset $, then a short calculation shows
	\begin{align}
	|S| \nonumber
	\leq n^2-\Lambda(T_1)-\Lambda(A)-\frac{1}{2}\Lambda(P_1)\le n^2+\frac{c^2}{8p^2m}-\| A\|. \nonumber
	\end{align}
	
	If $T_1\cap S\neq\emptyset $, then take a point $a\in T_1\cap S$, then one can check that $p(a+T_2)\cap T_2=\emptyset$. By Lemma \ref{lem1}, we have $|S\cap(p(a+T_2)\cup T_2)|\leq\Lambda(T_2)$. By the definition of $T_2$, any point $(x,y)\in p(a+T_2)$
	satisfies that $y\leq mx+c$ and $x\geq x_1, y\geq y_2$. We again arrive to
	\begin{align}
	|S| \nonumber
	\leq n^2-\Lambda(T_2)-\Lambda(A)-\frac{1}{2}\Lambda(P_2)\leq n^2+\frac{c^2}{8p^2m}-\| A\|. \nonumber
	\end{align}
	
	Suppose now that $c\leq n$. If $-\frac{c}{m}\leq n$, then $|S|\leq\frac{1}{2}n^2$ by excluding $A$ alone. So $-\frac{c}{m}>n$. Then $\| A\|=\frac{n(2n-mn-2c)}{2}$ and we get, using $c\le n$ and $x+y\ge 2\sqrt{xy}$ for $x,y>0$,
	\begin{align}
	|S| \nonumber
	&\leq n^2+\frac{c^2}{8p^2m}-\| A\| =\frac{c^2}{8p^2m}+\frac{n^2m}{2}+cn\leq cn-\frac{cn}{2p}\le\left(1-\frac{1}{2p}\right)n^2.  \nonumber
	\end{align}

	We may then assume $c>n$. The case $-\frac{c}{m}\le n$ can be handled as the above $c\le n$ and $-\frac{c}{m}\ge n$ case. Thus, we can assume $-\frac{c}{m}>n$. Then $A$ is a triangle with $\| A\|=-\frac{(n-mn-c)^2}{2m}$ and
	\begin{align*}
	|S|
	\leq n^2+\frac{c^2}{8p^2m}+\frac{(n-mn-c)^2}{2m} =\left(\frac{1}{8p^2}+\frac{1}{2}\right)\frac{c^2}{m}+\frac{n(m-1)}{m}c+\frac{n^2m}{2}+\frac{n^2}{2m}.
	\end{align*}
	The quadratic function of $c$ above is maximized when $c=-\frac{(m-1)n}{1+\frac{1}{4p^2}}$. Thus
	\begin{align}
	|S| \nonumber
	&\leq n^2\left[\frac{4p^2}{4p^2+1}+\left(\frac{1}{2}-\frac{2p^2}{4p^2+1}\right)\left(m+\frac{1}{m}\right)\right] \leq\left(1-\frac{2}{4p^2+1}\right)n^2, \nonumber
	\end{align}
	where the maximum is achieved when we choose $m=-1$ and thus $c=\frac{8p^2}{4p^2+1}n$.\medskip
	
	This completes the proof.
\end{proof}
\bibliographystyle{amsalpha}
\bibliography{ref}
\end{document}